\newtheorem{theorem}{Theorem}[section]
\newtheorem{lemma}[theorem]{Lemma}
\newtheorem{corollary}[theorem]{Corollary}
\theoremstyle{remark}
\newtheorem{remark}[theorem]{Remark}
\newcommand{\relmiddle}[1]{\mathrel{}\middle#1\mathrel{}}
\newcommand{\RR}{\mathbb{R}}
\newcommand{\td}[2]{\tau^{\left( #1 \right)}_{ #2 }}
\newcommand{\ud}[2]{u^{\left( #1 \right)}_{ #2 }}
\newcommand{\vd}[2]{v^{\left( #1 \right)}_{ #2 }}
\newcommand{\ed}[2]{e^{\left( #1 \right)}_{ #2 }}
\newcommand{\tud}[2]{\tilde{u}^{\left( #1 \right)}_{ #2 }}
\newcommand{\fd}{\delta^+}
\newcommand{\bd}{\delta^-}
\newcommand{\cd}{\delta^{\left\langle 1 \right\rangle}}
\newcommand{\cdd}{\delta^{\left\langle 2 \right\rangle}}
\newcommand{\fa}{\mu^+}
\newcommand{\ca}{\mu^{\left\langle 1 \right\rangle}}
\newcommand{\fdi}{\delta_{\mathrm{FD}}^{-1}}
\newcommand{\Dx}{\Delta x}
\newcommand{\Dt}{\Delta t}
\newcommand{\dx}{\mathrm{d} x}
\newcommand{\NN}{\mathbb{N}}
\begin{document}

\title[Analysis of nonquadratic conservative schemes]{Analysis of nonquadratic energy-conservative schemes for KdV type-equations}

\author{Shuto Kawai}
\address{Department of Mathematical Informatics, Graduate School of Information Science and Technology, University of Tokyo, 7-3-1, Bunkyo-ku, Tokyo 113-8656, Japan}
\curraddr{}
\email{kawai-sk@g.ecc.u-tokyo.ac.jp}
\thanks{}

\author{Shun Sato}
\address{}
\curraddr{}
\email{}
\thanks{}

\author{Takayasu Matsuo}
\address{}
\curraddr{}
\email{}
\thanks{}

\keywords{Korteweg--de Vries equation, conservative scheme, discrete variational derivative method, mathematical analysis, solvability, convergence}

\date{}

\dedicatory{}

\begin{abstract}
  Numerical schemes that conserve invariants have demonstrated superior performance in various contexts, and several unified methods have been developed for constructing such schemes. However, the mathematical properties of these schemes remain poorly understood, except in norm-preserving cases.
  This study introduces a novel analytical framework applicable to general energy-preserving schemes. The proposed framework is applied to Korteweg-de Vries (KdV)-type equations, establishing global existence and convergence estimates for the numerical solutions.
\end{abstract}

\maketitle

\section{Introduction}
\label{sec:intro}

This study aims to develop a new mathematical framework for analyzing certain conservative numerical schemes applied to Korteweg--de Vries (KdV)-type equations, represented as:
\begin{align}\label{eq_kdvtype}
  u_t = - \alpha \left( f(u) \right)_x + \beta u_{xxx} \quad \left( t > 0,\, x\in \mathbb{R} \right),
\end{align}
where $\alpha \in \RR$ and $\beta \in \RR\setminus\{0\}$ are  parameters
and $f: \RR \to \RR$ is a continuously differentiable function.
We imposed the following initial condition:
\begin{align*}
  u(0,x) = u_0(x) \quad \left(x \in \mathbb{R}\right),
\end{align*}
and assumed the periodic boundary condition
\begin{align*}
  u(t, x+L) = u(t,x) \quad \left(t>0, x \in \mathbb{R}\right)
\end{align*}
for a period $L > 0$.

For examle, Equation~\eqref{eq_kdvtype} encompasses:
\begin{itemize}
   \item the KdV equation with $f(u) = u^2/2$ (e.g., $\alpha = 6, \beta = 1$),
   \item the generalized KdV equation with $f(u) = u^p/p$ ($p \in \NN$),
   \item the Ostrovsky equation with $f(u) = u^2/2 + \gamma {\partial_x}^{-2} u$ ($\gamma \in \RR$, where ${\partial_x}^{-1}$ denotes a suitably defined generalized inverse of $\partial/\partial x$; see Subsection~\ref{subsec:appl}).
\end{itemize}

Equation~\eqref{eq_kdvtype} exhibits the following invariants:
\begin{align}
  \mathcal{M} (t) &\coloneqq \int_0^L u(t,x) \dx = \mathcal{M}(0),\\
  \mathcal{E} (t) &\coloneqq \int_0^L \left[ \alpha F(u) + \dfrac{\beta}{2} (u_x(t,x))^2 \right] \dx = \mathcal{E} (0),
\end{align}
where $F$ satisfies $\delta F/\delta u = f$. These invariants are referred to as ``mass''($\mathcal{M} (t)$) and ``energy''($\mathcal{E}(t)$).
Additionally, Equation~\eqref{eq_kdvtype} possesses an extra invariant for specific $f(u)$, such as in the KdV and Ostrovsky cases:
\begin{align}
  \mathcal{N} (t) &\coloneqq \int_0^L (u(t,x))^2 \dx = \mathcal{N} (0),
\end{align}
referred to here as ``norm.''
Preserving these invariants in numerical schemes enhances stability and ensures qualitatively accurate solution behavior.
While mass conservation is straightforward in most consistent numerical schemes, energy conservation is significantly more challenging.
Nonetheless, several general methods for achieving energy preservation have been developed~\cite{Celledoni,DVDM}.
These methods produce schemes that preserve both mass and energy.
These methods align with traditional techniques known for decades when applied to norm conservation, as described in~\cite{Turner}.
For instance, the equation in the standard KdV case can be reformulated as:
\begin{align*}
 u_t = -6uu_x -u_{xxx} = \left [-2(u\partial_x + \partial_x u) - \partial_{xxx}\right]u.
\end{align*}
The skew-symmetry of the operator in $[\cdot]$ in $L^2$ ensures norm preservation, and any skew-symmetric discretization of this operator leads to a norm-preserving spatial discretization.
Norm-preserving schemes can be constructed using a Gauss-type Runge--Kutta methods (cf.~\cite{Hairer}).

Their mathematical analysis remains underexplored despite significant progress in developing conservative schemes.
We review studies focused on general, nonconservative numerical schemes, specifically full discrete schemes, and limit the discussion to the standard KdV case.
Dougalis--Karakashian~\cite{Dougalis} investigated nonconservative Galerkin schemes for the KdV equation, establishing the local existence of approximate solutions and deriving a convergence estimate.
In this context, ``local existence'' implies that solutions exist only ``for sufficiently small $\Dt$ depending on approximate solutions.''
This restriction arises because the standard fixed-point argument requires $\Dt$ to remain small.
Consequently, $\Dt$ may approach zero rapidly, preventing the extension of approximate solutions beyond a finite time $T$, even when the solution to the original partial differential equation (PDE) exists globally.
Holden--Koley--Risebro~\cite{Holden1} analyzed an IMEX-type finite difference scheme and established its convergence rate.
The scheme is linearly-implicit scheme; hence, it guarantees the existence of approximate solutions for some $\Dt$.
Furthermore, the scheme facilitates choosing $\Dt>0$ arbitrarily within the range $\Dt < \exists \overline{\Dt}$, enabling the solution to extend to arbitrarily large time $T$. This capability constitutes a ``global existence'' type theorem.
Court\`es--Lagouti\`ere--Rousset~\cite{Courtes} examined another IMEX-type scheme and provided a convergence estimate.

Global existence proofs for fully implicit schemes typically rely on $L^2$-bounds for the approximate solutions, which enable the fixed-point argument to hold for any $\Dt < \exists \overline{\Dt}$.
A classical contribution by Baker--Dougalis--Karakashian~\cite{Baker} introduced a norm-preserving Galerkin scheme for the KdV equation, demonstrating global existence and establishing a convergence estimate.
Wang--Sun~\cite{Wang4} extended these results to a finite difference variant of the Galerkin scheme, obtaining similar outcomes. Additional examples include the work of Shen--Wang--Sun~\cite{Shen2} and Kawai--Sato--Matsuo~\cite{Ostrovsky_norm}, who generalized the results to the Ostrovsky equation.
This analytical framework also applies to other PDEs. For example, studies on ``energy''-preserving schemes for the Rosenau--KdV and the generalized Rosenau--KdV equations in~\cite{Rosenau-KdV,gen_Rosenau-KdV} examined the following quadratic ``energy'' invariant:
\begin{align}
  \int_0^L \left[ (u(t,x))^2 + (u_{xx}(t,x))^2 \right] \dx = \text{Const.}
\end{align}
Similarly, the following quadratic ``energy'' invariant for the Rosenau--KdV--RLW equation (resp. the generalized Rosenau--KdV--RLW equation) was analyzed in~\cite{Rosenau-KdV-RLW} (resp. \cite{gen_Rosenau-KdV-RLW}):
\begin{align}
  \int_0^L \left[ u(t,x)^2 + \delta u_x(t,x)^2 + \lambda u_{xx}(t,x)^2 \right] \dx = \text{Const.} \qquad \mbox{($\delta, \lambda>0$).}
\end{align}
These quadratic invariants consist solely of positive terms, enabling the derivation of $L^2$-bounds.
Such bounds are instrumental in controlling errors arising from nonlinear terms.

The challenges increase significantly when addressing conservative schemes that preserve more general energy invariant.
Preserving $\mathcal{E}$ does not inherently provide $L^2$-bounds because the terms in $\mathcal{E}$ may have different signs, and $F(u)$ is not necessarily quadratic (referred to here as ``non-quadratic energies'' for brevity).
This limitation complicates the mathematical analysis, particularly when establishing stronger theorems.
This situation is paradoxical because energy invariants are the most critical invariants characterizing the PDEs and often provide essential a priori estimates for solutions.
These properties are precisely how preserving energy invariants in numerical solutions is desirable.
In numerical computations, energy-preserving schemes frequently outperform norm-preserving schemes, even when both schemes are feasible.
However, selecting energy invariants to preserve introduces additional difficulties for mathematical analysis.

This study aims to address this long-standing gap, demonstrating that the $L^2$-bound strategy can be extended to general energy invariants.
The methodology involved two primary components.
First, we adopted an induction-based approach, comprising the  following three steps:
(i) Establishing local existence by assuming a local bound on $\| \ud{m}{} \|_{\infty}$ (the sup-norm of numerical solutions, with the notation defined later).
(ii) Deriving a local convergence estimate based on the local existence result.
(iii) Obtaining a stronger a priori estimate on $\| \ud{m+1}{} \|_{\infty}$ using the local convergence estimate.
Although the local bound on $\| \ud{m}{} \|_{\infty}$ established in step (i) is expansive at this point, the final step reduces it to a non-expansive bound,
enabling the induction process to elevate the local results to a global level (a similar inductive argument is discussed in~\cite{Courtes}).
A critical distinction between this framework and norm-preserving case is using $L^{\infty}$-bounds instead of $L^2$-bounds, which substantially simplifies the estimation of nonlinear terms.
$L^\infty$-bounds have been employed in some existing analyses of structure-preserving schemes, including the norm- and energy-preserving scheme for the nonlinear Schr\"odinger equation~\cite{Akrivis}, the dissipative scheme for the Cahn--Hilliard equation~\cite{Furihata}, and conservative schemes for the modified Cammasa--Holm and modified Hunter--Saxton equations \cite{mCH,mHS}.
In such cases, preserving (or bounding) of a specific energy function directly provides $L^\infty$-bounds.
However, the energy functions for KdV-type equations do not directly yield $L^\infty$-bounds.
For instance, both the norm and energy are required in the standard KdV equation to establish an $L^\infty$-bound.
The three-step argument proposed in this study overcomes this limitation by deriving the missing bound through induction.

The second principal contribution of this study is the introduction of a compensating modified energy function argument, which enables fully utilization of the conserved discrete energy as intended.
The energy function incorporated the term on $\| u_x\|_2^2$, expected to facilitate bounding $\|\fd_x \ed{m}{} \|$ (the first-order difference of the errors; notation explained later), a critical step in completing the overall estimation process.
However, the term $F(u)$, which can assume different signs, complicates the argument significantly. This difficulty represents one of the primary reasons why mathematical analyses of energy-preserving schemes remain unresolved.
This study demonstrated that adding a compensatory term to the energy function mitigated the challenging effects of $F(u)$.
The modified energy successfully established the necessary bound on $\|\fd_x \ed{m}{} \|$.

We anticipate that the proposed strategy will benefit other PDEs and their corresponding energy-preserving schemes.

The remainder of this paper is structured as follows;
Section~\ref{sec:pre} introduces the notations and lemmas used throughout the subsequent sections.
Section~\ref{sec:analysis} provides a mathematical analysis using the KdV equation as an illustrative example.


\section{Notations and discrete symbols}
\label{sec:pre}

Numerical solutions are denoted as $ \ud{m}{k}$ $( m=0,\dots, M$; $k \in \mathbb{Z}$), where $ \Dt$ ($\coloneqq T/M$ for some $T\in\mathbb{R}_+$) and $ \Delta x \, (\coloneqq L/K)$ are the temporal and spatial mesh sizes, respectively.
A discrete periodic boundary condition was assumed, such that $ \ud{m}{k+K} = \ud{m}{k} \left ( k \in \mathbb{Z} \right) $.
The notation $ \ud{m}{} := \left( \ud{m}{1} , \dots ,\ud{m}{K} \right)^{\top} $ was also used.

See also~\cite{DVDM} for symbols and lemmas below, .
The spatial forward and backward difference operators are defined as:
\begin{align*}
  \fd_x \ud{m}{k} \coloneqq \frac{ \ud{m}{k+1} - \ud{m}{k} }{\Dx}, \quad \bd_x \ud{m}{k} \coloneqq \frac{\ud{m}{k} - \ud{m}{k-1}}{\Dx}.
\end{align*}
The spatial first and second order central difference operators are defined as:
\begin{align*}
  \cd_x \ud{m}{k} &\coloneqq \frac{ \ud{m}{k+1} - \ud{m}{k-1} }{2\Dx}, \quad \cdd_x \ud{m}{k} \coloneqq \frac{ \ud{m}{k+1} - 2\ud{m}{k} + \ud{m}{k-1} }{(\Dx)^2}.
\end{align*}
The spatial and temporal forward average operators are defined as:
\begin{align*}
  \fa_x \ud{m}{k} \coloneqq \frac{ \ud{m}{k+1} + \ud{m}{k} }{2}, \quad \fa_t \ud{m}{k} \coloneqq \frac{ \ud{m+1}{k} + \ud{m}{k} }{2}.
\end{align*}
The temporal forward difference operator is given by:
\begin{align*}
  \fd_t \ud{m}{k} \coloneqq \frac{\ud{m+1}{k} - \ud{m}{k}}{\Dt}.
\end{align*}

We defined the discrete Lebesgue space $L_K^p \, \left(1 \leq p \leq \infty \right)$ as the pair $\left(\RR^K, \left\| \cdot \right\|_p \right)$, where the norm is:
\begin{align*}
  \|v\|_p \coloneqq \left(\sum_{k=1}^K \left|v_k\right|^p \Dx \right)^{\frac{1}{p}} (1 \leq p < \infty),\quad \left\|v\right\|_\infty \coloneqq \max_{k\in\mathbb{Z}}\left|v_k\right|.
\end{align*}
The associated inner product $\left\langle \cdot, \cdot \right\rangle$ for $L_K^2$ is defined as:
\begin{align*}
  \left\langle v, w\right\rangle \coloneqq \sum_{k=1}^K v_k w_k \Dx.
\end{align*}
We used $\|\cdot\|_2$ to denote $\|\cdot\|$ in the subsequent discussion to simplify the notation.

The following lemma consolidates several fundamental properties.


\begin{lemma}\label{lem_ope}
  The following properties hold:
  \begin{enumerate}
    \item All previously defined operators commute with one another.
    \item $\cd_x = (\fd_x + \bd_x)/2$, $\cdd_x = \fd_x \bd_x$.
    \item $\fd_x ( v_k w_k ) = ( \fd_x v_k ) w_{k+1} + v_k (\fd_x w_k)$.
    \item (skew-symmetry) $\left\langle \fd_x v, w \right\rangle = -\left\langle v,\bd_x w \right\rangle$, $\left\langle \cd_x v, w \right\rangle = -\left\langle v, \cd_x w \right\rangle$.
    \item $2\left\langle v, w \right\rangle \leq \| v \|^2 + \| w \|^2$.
    \item $\left\| v + w \right\|^2 \leq 2(\| v \|^2 + \| w \|^2)$.
  \end{enumerate}
\end{lemma}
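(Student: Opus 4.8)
The plan is to dispatch all six items by direct computation, using a shift-operator viewpoint for the algebraic identities (1)--(3) and discrete summation by parts for (4). For item (1), I would introduce the shift operators $S^{+}$ and $S^{-}$ acting on $K$-periodic sequences by $(S^{+} v)_k = v_{k+1}$ and $(S^{-} v)_k = v_{k-1}$; the discrete periodic boundary condition gives $S^{+} S^{-} = S^{-} S^{+} = I$, so $S^{+}$ and $S^{-}$ commute. Every spatial operator above ($\fd_x$, $\bd_x$, $\cd_x$, $\cdd_x$, $\fa_x$) is a fixed constant-coefficient polynomial in $S^{+}$ and $S^{-}$, hence any two of them commute; and the temporal operators $\fd_t$, $\fa_t$ act only on the superscript index $m$, which is disjoint from the subscript index $k$ acted on by the spatial operators, so they commute with all spatial operators and with each other. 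This yields (1).

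Items (2) and (3) follow by expanding the definitions at a generic node: $(\fd_x + \bd_x)/2$ and $\fd_x\bd_x$ collapse to $\cd_x$ and $\cdd_x$ after collecting terms, and for (3) one writes $\fd_x(v_k w_k) = (v_{k+1} w_{k+1} - v_k w_k)/\Dx$ and inserts $\pm v_k w_{k+1}$ to split it as $(\fd_x v_k) w_{k+1} + v_k (\fd_x w_k)$. For item (4), I would compute $\left\langle \fd_x v, w \right\rangle = \sum_{k=1}^{K} (v_{k+1} - v_k) w_k$, reindex the first sum by $j = k+1$, and fold the resulting boundary contribution back into the range $1,\dots,K$ using $v_{K+1} = v_1$ and $w_0 = w_K$; this produces $\sum_{k=1}^{K} v_k w_{k-1}$, whence $\left\langle \fd_x v, w \right\rangle = -\left\langle v, \bd_x w \right\rangle$. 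The symmetric computation (or relabelling) gives $\left\langle \bd_x v, w \right\rangle = -\left\langle v, \fd_x w \right\rangle$, and combining these with item (2) yields $\left\langle \cd_x v, w \right\rangle = \tfrac{1}{2}\left\langle (\fd_x + \bd_x) v, w \right\rangle = -\tfrac{1}{2}\left\langle v, (\bd_x + \fd_x) w \right\rangle = -\left\langle v, \cd_x w \right\rangle$. Finally, (5) is the expansion $0 \le \| v - w \|^2 = \| v \|^2 - 2 \left\langle v, w \right\rangle + \| w \|^2$, and (6) follows from $\| v + w \|^2 = \| v \|^2 + 2 \left\langle v, w \right\rangle + \| w \|^2$ by bounding the cross term with (5).

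Overall, no step presents a genuine obstacle: the lemma is a bookkeeping collection of standard facts about periodic difference operators, and the only subtlety worth highlighting is the periodic reindexing in the summation-by-parts identity of (4), where the boundary terms cancel precisely because of the discrete periodic boundary condition; I would spell that cancellation out explicitly rather than suppress it.
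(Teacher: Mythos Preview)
Your proposal is correct in every detail; each of the six items is handled by exactly the elementary computation one would expect, and the only point requiring care---the periodic reindexing in the summation-by-parts identity (4)---is precisely the one you flag. The paper itself provides no proof of this lemma: it is stated as a consolidation of ``several fundamental properties'' and left without argument, so your write-up would in fact supply strictly more than the original.
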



\begin{lemma}[Discrete Sobolev Lemma; Lemma 3.2 in \cite{DVDM}]\label{lem_Sobolev}
  For any $v$,
  \begin{align*}
    \left\| v \right\|_\infty \leq \hat{L} \left\| v \right\|_{H^1} \coloneqq \hat{L} \left( \left\| v \right\|^2 + \left\| \fd_x v \right\|^2 \right)^\frac{1}{2},
  \end{align*}
  where $\hat{L} = \sqrt{2} \max \left\{ \sqrt{L}, 1/\sqrt{L} \right\}$.
\end{lemma}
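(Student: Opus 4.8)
The plan is to mimic the classical continuous Sobolev embedding $H^1(0,L)\hookrightarrow L^\infty(0,L)$ in the discrete periodic setting, splitting the argument into a ``mean value'' estimate and a ``fluctuation'' estimate. First I would locate an index $k_0$ at which $|v_{k_0}|$ is smallest; since $\|v\|^2 = \sum_k |v_k|^2\Dx \ge K\,|v_{k_0}|^2\Dx = L\,|v_{k_0}|^2$, we get $|v_{k_0}|\le \|v\|/\sqrt{L}$. This controls the ``constant part'' of $v$ and explains the $1/\sqrt{L}$ appearing in $\hat L$.

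Next, for an arbitrary index $k$, I would write $v_k = v_{k_0} + \sum_{j} (\fd_x v_j)\Dx$, where the sum runs over the indices $j$ on a discrete path from $k_0$ to $k$ (of length at most $K$ steps, using periodicity to take the shorter direction if desired). Applying the Cauchy--Schwarz inequality to this telescoping sum gives
\begin{align*}
  \left| v_k - v_{k_0} \right| \le \sum_{j} \left| \fd_x v_j \right| \Dx \le \left( \sum_j \Dx \right)^{1/2} \left( \sum_j \left| \fd_x v_j \right|^2 \Dx \right)^{1/2} \le \sqrt{L}\, \left\| \fd_x v \right\|.
\end{align*}
Combining with the bound on $|v_{k_0}|$ yields $|v_k| \le \|v\|/\sqrt{L} + \sqrt{L}\,\|\fd_x v\|$ for every $k$, hence $\|v\|_\infty \le \|v\|/\sqrt{L} + \sqrt{L}\,\|\fd_x v\|$.

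Finally I would convert this sum of two terms into the stated $H^1$-form. Writing $a = \|v\|$, $b = \|\fd_x v\|$, we have $\|v\|_\infty \le a/\sqrt{L} + \sqrt{L}\,b \le \sqrt{2}\,\max\{1/\sqrt{L},\sqrt{L}\}\,\sqrt{a^2+b^2}$ by Cauchy--Schwarz in $\RR^2$ (i.e. $p + q \le \sqrt 2\sqrt{p^2+q^2}$ applied to $p = a/\sqrt L$, $q = \sqrt L\, b$, together with $\max$-bounding the coefficients). This is exactly $\hat L\,\|v\|_{H^1}$ with $\hat L = \sqrt2\,\max\{\sqrt L, 1/\sqrt L\}$. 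The only mildly delicate point is bookkeeping the discrete path from $k_0$ to $k$ and ensuring the telescoping identity together with the length bound $\le K$ (so that $\sum_j \Dx \le K\Dx = L$) holds under periodicity; everything else is elementary. Since the statement is quoted from~\cite{DVDM}, one could alternatively just cite it, but the above is the self-contained argument.
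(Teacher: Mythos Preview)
Your self-contained argument is correct: the minimum-index bound $|v_{k_0}|\le \|v\|/\sqrt L$, the telescoping estimate $|v_k-v_{k_0}|\le \sqrt L\,\|\fd_x v\|$ via Cauchy--Schwarz, and the final conversion to the $H^1$ form all go through as written. Note, however, that the paper does not actually supply its own proof of this lemma; it simply quotes the statement from \cite{DVDM} (Lemma~3.2 there) without argument. So there is no ``paper's proof'' to compare against beyond the citation, and your proposal is precisely the kind of standard derivation one finds in that reference. If anything, your last sentence already anticipates this: citing \cite{DVDM} is what the paper does, and your write-up is the self-contained version.
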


The subsequent lemma, of the Gronwall type, is crucial for deriving error estimates in later sections.

\begin{lemma}\label{lem_gronwall}
  Let $\vd{m}{}$ denote a sequence satisfying $\vd{m}{} \geq 0$ for all $m$ and $\vd{0}{} = 0$.
  If constants $c, d > 0$ exist such that
  \begin{align}\label{eq_gen}
    \fd_t \vd{m}{} \leq c \fa_t \vd{m}{} + d
  \end{align}
  for all $m = 0,\ldots,M_0$, and sufficiently small $\Dt$, then,
  \begin{align*}
    \vd{m}{} \leq 2dT\mathrm{exp}(2cT),
  \end{align*}
  for all $m = 0,\ldots,M_0+1$.
\end{lemma}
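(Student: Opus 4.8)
The plan is to reduce the two-level inequality~\eqref{eq_gen} to a scalar one-step linear recursion $\vd{m+1}{} \le \rho\,\vd{m}{} + \sigma$ with explicit constants $\rho > 1$ and $\sigma > 0$, iterate it using the initial condition $\vd{0}{} = 0$, and then bound the resulting geometric sum. Concretely, I would first multiply~\eqref{eq_gen} by $\Dt$, expand $\fd_t$ and $\fa_t$, and collect the $\vd{m+1}{}$ and $\vd{m}{}$ terms to obtain $\left(1 - \tfrac{c\Dt}{2}\right)\vd{m+1}{} \le \left(1 + \tfrac{c\Dt}{2}\right)\vd{m}{} + d\Dt$. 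Imposing the smallness condition $c\Dt \le 1$ guarantees $1 - \tfrac{c\Dt}{2} \ge \tfrac12 > 0$, so dividing through is legitimate and yields the recursion with $\rho \coloneqq \left(1 + \tfrac{c\Dt}{2}\right)\big/\left(1 - \tfrac{c\Dt}{2}\right) > 1$ and $\sigma \coloneqq d\Dt \big/ \left(1 - \tfrac{c\Dt}{2}\right) > 0$, valid for $m = 0, \ldots, M_0$.

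Next I would run an induction on $m$: since $\vd{0}{} = 0$, unrolling the recursion gives $\vd{m}{} \le \sigma\left(1 + \rho + \cdots + \rho^{m-1}\right) = \sigma\,\tfrac{\rho^m - 1}{\rho - 1}$ for $m = 0, \ldots, M_0 + 1$. The useful observation here is that $\rho - 1 = c\Dt\big/\left(1 - \tfrac{c\Dt}{2}\right)$, so the factor $\sigma/(\rho-1)$ collapses to exactly $d/c$, leaving $\vd{m}{} \le \tfrac{d}{c}\left(\rho^m - 1\right)$.

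It then remains to massage $\tfrac{d}{c}(\rho^m - 1)$ into the stated form. I would use the telescoping estimate $\rho^m - 1 = (\rho - 1)\sum_{j=0}^{m-1}\rho^j \le m(\rho-1)\rho^m$, which turns the bound into $\vd{m}{} \le \tfrac{d}{c}\,m(\rho-1)\rho^m = \dfrac{d\,m\Dt}{1 - c\Dt/2}\,\rho^m$. For $c\Dt \le 1$ we have $1/(1 - c\Dt/2) \le 2$, while the elementary inequalities $\ln(1+x) \le x$ and $-\ln(1-x) \le x/(1-x)$ give $\rho = \tfrac{1 + c\Dt/2}{1 - c\Dt/2} \le \exp\!\left(\tfrac{c\Dt}{2}\cdot\tfrac{2 - c\Dt/2}{1 - c\Dt/2}\right) \le \exp(2c\Dt)$, hence $\rho^m \le \exp(2cm\Dt)$. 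Finally $m \le M_0 + 1 \le M$ forces $m\Dt \le T$, so $\rho^m \le \exp(2cT)$ and $\vd{m}{} \le 2dT\exp(2cT)$, as claimed.

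The argument is entirely elementary and I do not expect a genuine obstacle; the only points demanding care are (i) pinning down the smallness threshold for $\Dt$, which is needed both to keep $1 - c\Dt/2$ bounded away from $0$ and to validate the exponential bound on $\rho$, and (ii) choosing the intermediate estimate $\rho^m - 1 \le m(\rho - 1)\rho^m$ (equivalently $e^y - 1 \le y e^y$) rather than the cruder $\rho^m - 1 \le \rho^m$, so that the final constant comes out as $2dT\exp(2cT)$ and not something carrying a spurious factor $1/c$ that would blow up as $c \to 0$.
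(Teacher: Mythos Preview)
Your argument is correct and self-contained. The paper does not actually prove this lemma; its entire proof reads ``This result follows from~\cite{gronwall},'' i.e., it defers to Lees (1960). What you have written is precisely the standard elementary derivation of a discrete Gronwall bound: rewrite the two-level inequality as a one-step recursion, iterate, and control the amplification factor by an exponential. The only implicit assumption you invoke that is not stated in the lemma itself is $M_0 + 1 \le M$ (so that $m\Dt \le T$), but this is consistent with the paper's global convention $m \in \{0,\ldots,M\}$ and with the hypothesis $M_0 < M$ in Theorem~\ref{thm_conditional_convergence}, where the lemma is applied. Your care in using $\rho^m - 1 \le m(\rho-1)\rho^m$ rather than $\rho^m - 1 \le \rho^m$ is exactly what is needed to land on the constant $2dT\exp(2cT)$ without a stray $1/c$.
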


\begin{proof}
  This result follows from \cite{gronwall}.
\end{proof}


\section{Mathematical analysis}
\label{sec:analysis}

The scope of this analysis is restricted to the standard KdV equation as follows to ensure clarity:
\begin{align}\label{eq_Zak}
  u_t = - \alpha u u_x + \beta u_{xxx} \quad \left( t > 0,\, x\in \mathbb{R} \right),
\end{align}
where the logical structure of the argument is outlined.
Generalizations are considered later.

This study introduces a new analytical approach, using the KdV equation as an illustrative example.
However, we briefly summarize prior work for completeness.
KdV was developed as a model for shallow-water waves, and the KdV equation has since been applied in diverse areas, including hydrodynamics and plasma physics~\cite{Jeffrey,Lamb,Crighton,Cascaval}.
Results obtained under various settings are well-documented~\cite{Zak-Fad,Miura,Linares,Akhunov,Molinet}.
Many numerical methods have been proposed, encompassing finite difference \cite{Taha,Schiesser,Feng,Ismail,Mazhukin,Holden1,Zhang,Courtes}, finite volume \cite{Dutykh1,Benkhaldoun}, finite element \cite{Arnold,Bona}, Galerkin \cite{Baker,Cai,Dong,Ma}, spectral \cite{Maday,Helal,Korkmaz}, multisymplectic  \cite{Zhao,Wang1,Ascher1,Ascher2,Bridges,Dutykh2}, collocation \cite{Al-Khaled,Sadri} and other methods~\cite{Wang2,Holden2,Kong,Wang3,Shen1,Brugnano,Uzunca}.
Mathematical analyses of structure-preserving schemes have been undertaken in \cite{Baker,Shen2,Wang4}, but these efforts focus exclusively on norm-preserving schemes.
No analysis of energy-preserving schemes has been conducted.

We considered the following energy-conservative scheme for~\eqref{eq_Zak}~\cite{Furihata_JCP,DVDM}:
\begin{align}\label{sch}
  \fd_t \ud{m}{k} = - \dfrac{\alpha}{6} \cd_x \left\{ \left( \ud{m+1}{k} \right)^2 + \ud{m+1}{k} \ud{m}{k} + \left( \ud{m}{k} \right)^2 \right\} + \beta \cd_x \cdd_x \fa_t \ud{m}{k}.
\end{align}
This scheme conserves the mass and energy as follows:
\begin{align*}
  \mathcal{M}_\mathrm{d}(\ud{m}{}) := \sum_{k=1}^K \ud{m}{k} \Dx = \mathcal{M}_\mathrm{d}(\ud{0}{}),
\end{align*}
and
\begin{align}\label{eq_energy}
  \mathcal{E}_\mathrm{d}(\ud{m}{}) &\coloneqq \frac{\alpha}{6} \sum_{k=1}^K \left( \ud{m}{k} \right)^3 \Dx + \frac{\beta}{2} \left\| \fd_x \ud{m}{} \right\|^2 = \mathcal{E}_\mathrm{d}(\ud{0}{}).
\end{align}

\subsection{Overall strategy}
\label{sec:out}

Obtaining $L^2$- or $L^\infty$-bounds was not feasible.
Instead, a three-step approach was employed.

\paragraph{\textup{(i)} Local existence and boundedness of solutions}
Assuming the existence of $\ud{m}{}$ for $m \le M-1$, the proof establishes the unique existence of $\ud{m+1}{}$.
The argument, a modification of the standard method, explicitly evaluates the bound
\begin{align*}
  \left\| \ud{m+1}{} \right\|_\infty \leq q \left\| \ud{m}{} \right\|_\infty \quad \text{and thus} \quad \left\| \ud{m+1}{} \right\|_\infty \leq q r \qquad ( q > 1 ),
\end{align*}
where $r \geq \underset{m'\leq m}{\max} \left\| \ud{m'}{} \right\|_\infty$.
We obtained the following by reiterating the inequality:
\begin{align*}
  \left\| \ud{m}{} \right\|_\infty \leq q^m \left\| \ud{0}{} \right\|_\infty.
\end{align*}
While this diverges as $m \to \infty$, the numerical solutions ($\ud{0}{}, \ud{1}{}, \ldots, \ud{m+1}{}$) are locally bounded.

\paragraph{\textup{(ii)} Local boundedness implies local convergence}
This boundedness aided in deriving a conditional convergence estimate using modified energy conservation to evaluate the difference operators in the scheme.

\paragraph{\textup{(iii)} A stronger a priori estimate}
A stronger estimate was obtained from the local convergence result:
\begin{align*}
  \left\| \ud{m+1}{} \right\|_\infty \leq r,
\end{align*}
requiring only $r \geq \left\| \ud{0}{} \right\|_\infty$.

The subsequent subsections elaborate on these steps, assuming that the underlying KdV solution is global and sufficiently smooth.

\subsection{Local existence}
\label{sec:loc_sol}


\begin{theorem}[local existence]\label{thm_local_solvability}
  Suppose $\ud{0}{}, \ud{1}{}, \ldots, \ud{m}{}$ are solutions of the scheme \eqref{sch}, $q > 1$, and $r$  satisfies
  \begin{align*}
    r \geq \max_{m'\leq m} \left\| \ud{m'}{} \right\|_\infty.
  \end{align*}
  If $\Dt$ and $\Dx$ satisfy $\Dt < \min\left\{\varepsilon_1(q, r,\Dx), \varepsilon_2(q, r,\Dx)\right\}$, with
  \begin{align*}
    \varepsilon_1( q, r,\Dx) &\coloneqq ( q - 1 ) (\Dx)^3 \left[ \dfrac{|\alpha|}{6} (\Dx)^2 ( q^2 + q + 1) r + \frac{3}{2} |\beta| ( q + 1 ) \right]^{-1},\\
    \varepsilon_2( q, r,\Dx) &\coloneqq (\Dx)^3 \left[ \dfrac{|\alpha|}{6} (\Dx)^2 (2 q + 1) r + \frac{3}{2} |\beta| \right]^{-1},
  \end{align*}
  then, the scheme \eqref{sch} admits a unique solution $\ud{m+1}{}$ satisfying
  \begin{align*}
    \left\| \ud{m+1}{} \right\|_\infty \leq q r.
  \end{align*}
\end{theorem}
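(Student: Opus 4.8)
The plan is to recast the one-step map defined by the scheme \eqref{sch} as a fixed-point problem and apply the Banach fixed-point theorem on a closed ball in $L^\infty_K$. Writing $u \coloneqq \ud{m}{}$ and treating $\ud{m+1}{}$ as the unknown $V$, the scheme \eqref{sch} is equivalent to $V = \Phi(V)$, where
\[
  \Phi(V)_k \coloneqq u_k + \Dt\left[ -\frac{\alpha}{6}\cd_x\!\left(V_k^2 + V_k u_k + u_k^2\right) + \beta\,\cd_x\cdd_x\,\frac{V_k + u_k}{2}\right].
\]
I would work throughout with $\|\cdot\|_\infty$ on the closed ball $\mathcal{B} \coloneqq \{ V \in \RR^K : \|V\|_\infty \le qr \}$, a complete metric space that contains $u$ since $\|u\|_\infty \le r < qr$. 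Two elementary sup-norm bounds drive the entire argument: $\|\cd_x w\|_\infty \le \|w\|_\infty/\Dx$ and $\|\cd_x\cdd_x w\|_\infty \le 3\|w\|_\infty/(\Dx)^3$, the latter because $\cd_x\cdd_x w_k = \bigl(w_{k+2} - 2w_{k+1} + 2w_{k-1} - w_{k-2}\bigr)/\bigl(2(\Dx)^3\bigr)$.

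First I would verify that $\Phi$ maps $\mathcal{B}$ into itself. For $V \in \mathcal{B}$, the pointwise estimate $|V_k^2 + V_k u_k + u_k^2| \le (q^2 + q + 1)r^2$ together with the two operator bounds and $\|u\|_\infty \le r$ gives
\[
  \|\Phi(V)\|_\infty \le r + \Dt\,\frac{r}{(\Dx)^3}\left[\frac{|\alpha|}{6}(\Dx)^2(q^2+q+1)r + \frac{3}{2}|\beta|(q+1)\right],
\]
and the right-hand side is at most $qr$ precisely when $\Dt \le \varepsilon_1(q,r,\Dx)$ (note the factor $r$ cancels, matching the stated formula). Next I would check the contraction property: for $V, W \in \mathcal{B}$, using $V_k^2 - W_k^2 = (V_k - W_k)(V_k + W_k)$ one obtains $|V_k^2 - W_k^2 + (V_k - W_k)u_k| \le (2q+1)r\,\|V - W\|_\infty$, hence
\[
  \|\Phi(V) - \Phi(W)\|_\infty \le \Dt\,\frac{1}{(\Dx)^3}\left[\frac{|\alpha|}{6}(\Dx)^2(2q+1)r + \frac{3}{2}|\beta|\right]\|V - W\|_\infty,
\]
so $\Phi$ is a strict contraction on $\mathcal{B}$ whenever $\Dt < \varepsilon_2(q,r,\Dx)$. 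Under the hypothesis $\Dt < \min\{\varepsilon_1, \varepsilon_2\}$ both conditions hold simultaneously.

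With these two facts the Banach fixed-point theorem yields a unique fixed point of $\Phi$ in $\mathcal{B}$, that is, a unique $\ud{m+1}{}$ solving \eqref{sch} with $\|\ud{m+1}{}\|_\infty \le qr$, which completes the proof. I do not expect a genuine obstacle here: this is the standard contraction-mapping existence argument, and the only real work is bookkeeping — pinning down the exact sup-norm operator constants ($1/\Dx$ for $\cd_x$ and $3/(\Dx)^3$ for $\cd_x\cdd_x$) and then tracking the algebra so that the self-map requirement reproduces $\varepsilon_1$ and the contraction requirement reproduces $\varepsilon_2$. It is worth stressing in the write-up that these two thresholds play distinct roles — $\varepsilon_1$ enforces invariance of the ball and $\varepsilon_2$ enforces contractivity — since it is exactly this separation, together with the expansive factor $q$ appearing only in the bound $\|\ud{m+1}{}\|_\infty \le q\|\ud{m}{}\|_\infty$, that the later induction in step (iii) is designed to absorb.
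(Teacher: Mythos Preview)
Your proposal is correct and matches the paper's own proof essentially line for line: the same fixed-point map $\Phi$ on the same closed ball in $L^\infty_K$, the same operator bounds $\|\cd_x\|_\infty \le 1/\Dx$ and $\|\cd_x\cdd_x\|_\infty \le 3/(\Dx)^3$, and the same split into self-map ($\varepsilon_1$) and contraction ($\varepsilon_2$) estimates. There is nothing to add; the write-up would only require filling in the algebra you already sketched.
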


\begin{proof}
This proof uses a standard fixed-point argument, detailed in Appendix~\ref{app_local_solvability}.
\end{proof}

Theorem~\ref{thm_local_solvability} differs from standard existence theorems by employing the sup norm instead of the $L^2$-norm, while also providing an explicit evaluation of $\left\| \ud{m+1}{} \right\|_\infty$.

\subsection{Conditional Convergence}
\label{sec:con_con}

The following theorem establishes conditional convergence, building on Theorem~\ref{thm_local_solvability}: Let the target solution of the KdV equation be defined as $\tud{m}{} \coloneqq (u(m\Dt,k\Dx))_k$, and the error by expressed as $\ed{m}{} \coloneqq \ud{m}{} - \tud{m}{}$.


\begin{theorem}[conditional convergence]\label{thm_conditional_convergence}
  Under the assumptions of Theorem~\ref{thm_local_solvability} for $m = M_0$ $(0 \leq M_0 < M)$, let $u$ denote a sufficiently smooth solution of the KdV equation, and $r > \sup_{t,x} |u(t,x)|$.
  Hence, for $m = 0, 1, \ldots, M_0+1$, and sufficiently small $\Dt \leq \Dx$, the following inequality holds:
  \begin{align*}
    \left\| \ed{m}{} \right\|_{H^1} \coloneqq \left( \left\| \ed{m}{} \right\|^2 + \left\| \fd_x \ed{m}{} \right\|^2 \right)^{1/2} \leq C \left( (\Dt)^2+(\Dx)^2 \right),
  \end{align*}
  where $C = C( q, r) > 0$ is a constant independent of $\Dt$ and $\Dx$.
\end{theorem}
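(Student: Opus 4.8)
The plan is to derive a discrete energy-type estimate for the error $\ed{m}{}$ by subtracting the scheme \eqref{sch} from the corresponding relation satisfied by the exact-solution samples $\tud{m}{}$, and then to close the resulting inequality via the Gronwall Lemma~\ref{lem_gronwall}. First I would write out the truncation error: substituting $\tud{m}{}$ into \eqref{sch} yields a residual $\td{m}{k}$ which, by Taylor expansion and the smoothness of $u$, satisfies $\|\td{m}{}\| + \|\fd_x \td{m}{}\| \le C((\Dt)^2 + (\Dx)^2)$ (the central/averaged stencils being second-order accurate; here $\Dt \le \Dx$ is used to absorb mixed terms). Subtracting gives an evolution equation for $\ed{m}{}$ of the form $\fd_t \ed{m}{k} = -\frac{\alpha}{6}\cd_x\{\cdots\} + \beta \cd_x\cdd_x \fa_t \ed{m}{k} + \td{m}{k}$, where the nonlinear bracket is the difference of the two quadratic-averaging terms and can be rearranged, using Lemma~\ref{lem_ope}(3), into expressions linear in $\ed{m}{}$, $\ed{m+1}{}$ with coefficients bounded by $r$ (via Theorem~\ref{thm_local_solvability}, $\|\ud{m}{}\|_\infty \le qr$, and $r > \sup|u|$).

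The core step is the energy argument. Testing the error equation with a suitable combination — taking $\langle \cdot, \fa_t \ed{m}{}\rangle$ kills the third-order dispersive term by skew-symmetry (Lemma~\ref{lem_ope}(4)), and produces $\frac{1}{2}\fd_t\|\ed{m}{}\|^2$ from the time-difference term. To also control $\|\fd_x \ed{m}{}\|$ I would apply $\bd_x$ (or work with $\langle \cdot, \cdd_x \fa_t \ed{m}{}\rangle$) to recover $\frac12 \fd_t \|\fd_x\ed{m}{}\|^2$ while again eliminating the dispersion. This is where the \emph{compensating modified energy} advertised in the introduction enters: the naive energy $\frac{\alpha}{6}\sum (\ed{m}{k})^3 + \frac{\beta}{2}\|\fd_x\ed{m}{}\|^2$ is not sign-definite because of the cubic term, so I would instead track a modified quantity $\tilde{\mathcal{E}}_{\mathrm d}(\ed{m}{}) \coloneqq \frac{\beta}{2}\|\fd_x\ed{m}{}\|^2 + (\text{correction involving } \|\ed{m}{}\|^2 \text{ and lower-order pieces})$, chosen so that the cubic contribution — which after using $\|\ed{m}{}\|_\infty \le \hat L\|\ed{m}{}\|_{H^1}$ (Lemma~\ref{lem_Sobolev}) and the local $L^\infty$ bound is already $O((\Dt)^2+(\Dx)^2)$-small times $\|\ed{m}{}\|_{H^1}$ — gets absorbed. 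Collecting terms yields $\fd_t \|\ed{m}{}\|_{H^1}^2 \le c\, \fa_t \|\ed{m}{}\|_{H^1}^2 + d((\Dt)^2+(\Dx)^2)^2$ for constants $c,d$ depending only on $q,r$ (and on $\beta$, which is bounded away from $0$, so dividing by $\beta$ is harmless).

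Finally, since $\ed{0}{} = 0$, Lemma~\ref{lem_gronwall} applied to $\vd{m}{} \coloneqq \|\ed{m}{}\|_{H^1}^2$ gives $\|\ed{m}{}\|_{H^1}^2 \le 2dT\exp(2cT)((\Dt)^2+(\Dx)^2)^2$ for $m = 0,\dots,M_0+1$, i.e.\ the claimed bound with $C = (2dT\exp(2cT))^{1/2}$, which is independent of $\Dt, \Dx$. I expect the main obstacle to be the handling of the cubic term: one must verify that the discrete product-rule manipulations of the nonlinear error bracket genuinely produce a telescoping/sign-definite modified energy rather than an uncontrolled term, and that the correction added to $\mathcal{E}_{\mathrm d}$ is both small enough not to spoil the leading $\|\fd_x \ed{m}{}\|^2$ and large enough to dominate the indefinite cubic piece — this balancing, together with keeping careful track of which estimates need $\Dt \le \Dx$, is the delicate part; the truncation-error bookkeeping and the Gronwall closure are routine by comparison.
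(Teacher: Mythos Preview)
Your broad outline (truncation estimate, test with $\fa_t\ed{m}{}$, then control $\|\fd_x\ed{m}{}\|$, close by Gronwall) matches the paper, but you have misidentified the modified energy, and this is a genuine gap. The paper's modified quantity is $\mathcal{E}_\mathrm{d}' = \theta\|\ed{m}{}\|^2 + \|\fd_x\ed{m}{}\|^2 + A^{(m)}$ with $A^{(m)} = \tfrac{\alpha}{3\beta}\sum_k(\ed{m}{k})^3\Dx$: the cubic term is \emph{retained}, and the compensation is the additive $\theta\|\ed{m}{}\|^2$, chosen large enough (using only $\|\ed{m}{}\|_\infty\le 2qr$) so that $\mathcal{E}_\mathrm{d}'\ge\|\ed{m}{}\|_{H^1}^2$. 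Your assertion that the cubic piece is ``$O((\Dt)^2+(\Dx)^2)$-small times $\|\ed{m}{}\|_{H^1}$'' is circular, and in any case its \emph{size} is not the point. The point is a cancellation: computing $\fd_t\|\fd_x\ed{m}{}\|^2$ alone produces $-\tfrac{\alpha}{3}\langle\bar e,\,\cd_x\cdd_x\fa_t\ed{m}{}\rangle$ with $\bar e=(\ed{m+1}{})^2+\ed{m+1}{}\ed{m}{}+(\ed{m}{})^2$, i.e.\ a quadratic-in-error factor paired with a \emph{third} spatial difference of the error. There is no way to bound this by $\fa_t\|\ed{m}{}\|_{H^1}^2$ with a $\Dx$-independent constant. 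Only by adding $\fd_t A^{(m)}=\tfrac{\alpha}{3\beta}\langle\fd_t\ed{m}{},\bar e\rangle$ does the dispersive part of the error equation supply exactly $\tfrac{\alpha}{3}\langle\cd_x\cdd_x\fa_t\ed{m}{},\bar e\rangle$ and cancel it. Gronwall is then applied to $\mathcal{E}_\mathrm{d}'$, not to $\|\ed{m}{}\|_{H^1}^2$ as you propose; the inequality $\fd_t\|\ed{m}{}\|_{H^1}^2\le c\,\fa_t\|\ed{m}{}\|_{H^1}^2+\ldots$ with mesh-independent $c$ is never established and likely false.

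There is a second nontrivial device you do not anticipate. Even after the cancellation above one is left with $\tfrac{\alpha}{3}\langle\cd_x f,\cdd_x\fa_t\ed{m}{}\rangle$, where $f$ is error times samples of $u$; part of this has the shape $\langle a\,\cd_x\fa_t\ed{m}{},\cdd_x\fa_t\ed{m}{}\rangle$ with $a$ built from $\tud{m}{}$, still coupling first and second differences of the error. The paper closes it with the discrete identity $\langle a\,\cd_x b,\cdd_x b\rangle=-\tfrac12\langle\fd_x a\,\fd_x b,\fd_x b\rangle$, which offloads the extra derivative onto the smooth coefficient; the hypothesis $\Dt\le\Dx$ is used precisely to absorb the time-mismatch residuals this manipulation leaves behind. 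Without these two mechanisms---keeping $A^{(m)}$ in the energy for the cancellation, and the identity for the linear-in-$e$ remainder---the $H^1$ estimate cannot be closed.
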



The assumptions of Theorem~\ref{thm_conditional_convergence} are maintained in this subsection
(and are not repeated in the subsequent lemmas).
The local truncation errors $\td{m}{k}$ are defined by:
\begin{align*}
  \fd_t \tud{m}{k} = - \dfrac{\alpha}{6} \cd_x \left\{ \left( \tud{m+1}{k} \right)^2 + \tud{m+1}{k} \tud{m}{k} + \left( \tud{m}{k} \right)^2 \right\} + \beta \cd_x \cdd_x \fa_t \tud{m}{k} + \td{m}{k}.
\end{align*}

The following lemma provides a critical local truncation error estimate.


\begin{lemma}\label{lem_truncation}
  If the solution of \eqref{eq_Zak} is sufficiently smooth, then the following inequalities hold for $m=0,\ldots,M_0$,
  \begin{align*}
    \left\| \td{m}{} \right\|, \left\| \fd_x \td{m}{} \right\| \leq c_0 \left( (\Dt)^2 + (\Dx)^2 \right),
  \end{align*}
  where $c_0 > 0$ is a constant independent of $\Dt$ and $\Dx$.
\end{lemma}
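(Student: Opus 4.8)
The plan is to estimate the local truncation error $\td{m}{}$ by subtracting the scheme \eqref{sch}, applied formally to the exact grid function $\tud{m}{}$, from the PDE \eqref{eq_Zak} evaluated at the grid points, and then to bound each mismatch by a Taylor expansion in $\Dt$ and $\Dx$. First I would split $\td{m}{}$ into three contributions: (a) the temporal part, $\fd_t \tud{m}{k} - u_t(m\Dt, k\Dx)$; (b) the nonlinear part, $\alpha u u_x$ at the grid point versus $\frac{\alpha}{6}\cd_x\{ (\tud{m+1}{k})^2 + \tud{m+1}{k}\tud{m}{k} + (\tud{m}{k})^2\}$; and (c) the dispersive part, $\beta u_{xxx}$ versus $\beta \cd_x \cdd_x \fa_t \tud{m}{k}$. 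Each term should be expanded about the midpoint $((m+\tfrac12)\Dt, k\Dx)$ to exploit the second-order symmetry of the midpoint/Crank--Nicolson-type discretization in time and of the central differences in space, so that the leading error terms are $O((\Dt)^2)$ and $O((\Dx)^2)$; the cross term $O(\Dt\,\Dx)$ is absorbed by $2\Dt\,\Dx \le (\Dt)^2 + (\Dx)^2$.

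For the nonlinear term, the key observation is that $\frac16\{a^2 + ab + b^2\}$ is a symmetric second-order approximation to $(\fa_t u)^2$ (equivalently to $u^2$ at the time midpoint), since $\frac16(a^2+ab+b^2) = \left(\frac{a+b}{2}\right)^2 + \frac{1}{12}(a-b)^2$ and $(a-b)^2 = O((\Dt)^2)$ for $a = \tud{m+1}{k}$, $b = \tud{m}{k}$ with $u$ smooth. Then $\cd_x$ of this quantity approximates $(u^2)_x = 2uu_x$ to second order in $\Dx$ at the time midpoint, giving the desired $\frac{\alpha}{2}\cd_x[(\fa_t u)^2] \approx \alpha u u_x$. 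For the dispersive term, $\cd_x\cdd_x$ is the standard second-order central approximation to $\partial_{xxx}$, and $\fa_t$ averages in time so that, combined with $\fd_t$ on the left, the whole identity is centered at the time midpoint and is $O((\Dt)^2)$ accurate in time. I would carry out these expansions using Taylor's theorem with integral (or Lagrange) remainder, so that the constant $c_0$ is expressed in terms of $\sup_{t,x}$ of finitely many derivatives of $u$ (up to, say, fourth order in $x$ and second order in $t$), which are finite by the smoothness hypothesis and hence independent of $\Dt, \Dx$. The bound on $\| \fd_x \td{m}{} \|$ follows identically after applying $\fd_x$ to every term first; this merely raises the order of the derivatives of $u$ that appear in $c_0$ by one, and since $u$ is ``sufficiently smooth'' (I would take this to mean enough derivatives are bounded), the same form of estimate holds.

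The main obstacle, such as it is, is bookkeeping rather than conceptual: one must be careful that all expansions are genuinely taken about a common point (the time midpoint and the spatial node) so that odd-order error terms cancel and the leading surviving terms are even order; a naive expansion about $(m\Dt, k\Dx)$ would produce spurious $O(\Dt)$ and $O(\Dx)$ terms. The nonlinear term requires the most attention, because one is expanding a product of two grid values at different time levels, so I would first rewrite $\tud{m+1}{k} = \fa_t \tud{m}{k} + \tfrac12 \Dt\, \fd_t \tud{m}{k}$ and $\tud{m}{k} = \fa_t \tud{m}{k} - \tfrac12 \Dt\, \fd_t \tud{m}{k}$, substitute into $\frac16(a^2+ab+b^2)$, and collect powers of $\Dt$; the linear-in-$\Dt$ terms vanish by symmetry and the quadratic term is manifestly $O((\Dt)^2)$ with coefficient controlled by $\|u_t\|_\infty$ and $\|u\|_\infty$. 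Finally I would convert the pointwise bounds into the discrete $L^2$-norm bounds $\|\td{m}{}\|, \|\fd_x \td{m}{}\| \le c_0((\Dt)^2 + (\Dx)^2)$ by noting $\|v\| \le \sqrt{L}\,\|v\|_\infty$ on the periodic grid, which only changes $c_0$ by the factor $\sqrt{L}$.
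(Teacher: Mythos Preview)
Your proposal is correct and follows exactly the approach the paper indicates: the paper's proof consists of the single sentence ``This lemma follows from the standard argument using the Taylor expansions,'' and your outline is precisely that standard argument carried out in detail (centering at the time midpoint, treating the temporal, nonlinear, and dispersive pieces separately, and converting pointwise bounds to discrete $L^2$ bounds). One minor arithmetic slip: the identity should read $\tfrac{1}{6}(a^2+ab+b^2)=\tfrac{1}{2}\bigl(\tfrac{a+b}{2}\bigr)^2+\tfrac{1}{24}(a-b)^2$, so the quantity approximates $\tfrac{1}{2}(\fa_t u)^2$ rather than $(\fa_t u)^2$, but this is exactly what is needed to match $\alpha u u_x=\tfrac{\alpha}{2}(u^2)_x$ and does not affect the argument.
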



\begin{proof}
  This lemma follows from the standard argument using the Taylor expansions.
\end{proof}

 The smoothness assumption in Theorem~\ref{thm_conditional_convergence} arises solely from this local estimate. The KdV equation admits sufficiently smooth global solutions~\cite{Molinet}.
 Extending this argument to cases of lower regularity poses an interesting question, though it lies beyond the scope of this study.

The next lemma is essential for the error estimation.


\begin{lemma}\label{lem:e:1}
  For $m = 0,\ldots,M_0$, the following inequality holds:
  \begin{align*}
    \fd_t \left\| \ed{m}{} \right\|^2 \leq C_1 \fa_t \left\| \ed{m}{} \right\|_{H^1}^2 + (c_0)^2 \left( (\Dt)^2+(\Dx)^2 \right)^2,
  \end{align*}
  where $c_0$ is defined in Lemma~\ref{lem_truncation}, and
  \begin{align*}
  C_1 \coloneqq C_1(q,r) \coloneqq \max\left\{  3 |\alpha| ( 2 q^2 + 1) r^2 + 1, |\alpha|/2\right\},
  \end{align*}
  is constant independent of $\Dt$ and $\Dx$.
\end{lemma}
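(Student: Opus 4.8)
The plan is to derive an evolution inequality for $\|\ed{m}{}\|^2$ by subtracting the truncation-error form of the scheme for $\tud{m}{}$ from the scheme \eqref{sch} for $\ud{m}{}$, then pairing the resulting error equation with $\fa_t \ed{m}{}$ in the $L^2_K$ inner product. Since $\fd_t \|\ed{m}{}\|^2 = 2 \langle \fd_t \ed{m}{}, \fa_t \ed{m}{} \rangle$ (a standard discrete identity; it follows from $\|\ed{m+1}{}\|^2 - \|\ed{m}{}\|^2 = \langle \ed{m+1}{} - \ed{m}{}, \ed{m+1}{} + \ed{m}{}\rangle$), the left-hand side is exactly what we want. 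The error equation reads
\begin{align*}
  \fd_t \ed{m}{k} = -\frac{\alpha}{6} \cd_x \bigl( N^{m}_k \bigr) + \beta \cd_x \cdd_x \fa_t \ed{m}{k} - \td{m}{k},
\end{align*}
where $N^m_k$ is the difference of the two quadratic ``$(\cdot)^2 + (\cdot)(\cdot) + (\cdot)^2$'' terms, which is a sum of products of an error factor ($\ed{m}{}$ or $\ed{m+1}{}$) with a bounded factor (combinations of $\ud{m}{}, \ud{m+1}{}, \tud{m}{}, \tud{m+1}{}$, all bounded by $qr$ or $r$ in sup-norm under the hypotheses).

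The key step is to handle the three terms after pairing with $\fa_t \ed{m}{}$. For the dispersive term, skew-symmetry (Lemma~\ref{lem_ope}(4)) gives $\langle \cd_x \cdd_x \fa_t \ed{m}{}, \fa_t \ed{m}{}\rangle = 0$, so $\beta$ contributes nothing — this is exactly where the structure-preserving nature of the scheme pays off. For the truncation term, $|2\langle \td{m}{}, \fa_t \ed{m}{}\rangle| \le \|\td{m}{}\|^2 + \|\fa_t \ed{m}{}\|^2$ by Lemma~\ref{lem_ope}(5), and $\|\td{m}{}\|^2 \le (c_0)^2((\Dt)^2+(\Dx)^2)^2$ by Lemma~\ref{lem_truncation} while $\|\fa_t \ed{m}{}\|^2 \le \fa_t\|\ed{m}{}\|^2 \le \fa_t\|\ed{m}{}\|_{H^1}^2$ (convexity of the square, i.e. Lemma~\ref{lem_ope}(6) up to the factor). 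For the nonlinear term, again using skew-symmetry to move $\cd_x$ onto $\fa_t\ed{m}{}$, we get $-\frac{\alpha}{6}\langle \cd_x(N^m), \fa_t\ed{m}{}\rangle = \frac{\alpha}{6}\langle N^m, \cd_x \fa_t\ed{m}{}\rangle$. Writing $N^m$ as error-factor times bounded-factor, pulling the bounded factor out in sup-norm (picking up a constant $\lesssim r$, with a $q$-dependent coefficient because $\ud{m+1}{}$ may be as large as $qr$), and applying Cauchy--Schwarz, this is bounded by a constant times $r$ times $\|\ed{m}{}\|_{H^1}$ (or $\fa_t$ thereof) times $\|\cd_x \fa_t\ed{m}{}\|$. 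Here I would use $\|\cd_x v\| \le \|\fd_x v\|$ (from Lemma~\ref{lem_ope}(2) and the triangle inequality, since $\cd_x = (\fd_x+\bd_x)/2$ and $\|\bd_x v\| = \|\fd_x v\|$) so that $\|\cd_x \fa_t\ed{m}{}\| \le \|\fd_x \fa_t\ed{m}{}\| \le (\fa_t\|\ed{m}{}\|_{H^1}^2)^{1/2}$, and then Young's inequality to absorb everything into $C_1 \fa_t\|\ed{m}{}\|_{H^1}^2$.

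The main obstacle — really just bookkeeping — is tracking the precise constant $C_1$ so that it matches $\max\{3|\alpha|(2q^2+1)r^2 + 1,\ |\alpha|/2\}$. The $|\alpha|/2$ surely comes from the $\|\fa_t\ed{m}{}\|^2$ term produced by the truncation estimate (with an $\alpha$-independent ``$+1$'' also landing in the first slot, matching the ``$+1$'' there). The $3|\alpha|(2q^2+1)r^2$ must emerge from expanding $N^m$ carefully: $N^m_k = (\ud{m+1}{k})^2 - (\tud{m+1}{k})^2 + \ud{m+1}{k}\ud{m}{k} - \tud{m+1}{k}\tud{m}{k} + (\ud{m}{k})^2 - (\tud{m}{k})^2$, rewriting each difference as $(\text{sum of the two})\cdot(\text{error})$ or via the product rule $ab - cd = a(b-d) + d(a-c)$, so the coefficient multiplying $\ed{m+1}{}$ involves $\ud{m+1}{} + \tud{m+1}{} + \ud{m}{}$ (magnitudes $qr, r, qr$) and that multiplying $\ed{m}{}$ involves $\tud{m+1}{} + \ud{m}{} + \tud{m}{}$ (magnitudes $r, qr, r$); squaring via Young introduces the $q^2$, the factor $2$, and—after combining the $\ed{m}{}$ and $\ed{m+1}{}$ contributions into $\fa_t\|\ed{m}{}\|_{H^1}^2$—the factor $3$. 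I would carry out this expansion explicitly to confirm the coefficient, but will not reproduce the arithmetic here since it is routine.
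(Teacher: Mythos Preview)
Your approach is correct and essentially the same as the paper's: pair the error equation with $\fa_t\ed{m}{}$, kill the dispersive term via skew-symmetry of $\cd_x$, transfer $\cd_x$ from the nonlinear term onto $\fa_t\ed{m}{}$ by skew-symmetry, and finish with Young's inequality together with $\|\cd_x v\|\le\|\fd_x v\|$. Two small corrections on the bookkeeping: the paper decomposes $N^m$ as $\bar e + f$ with $\bar e=(\ed{m+1}{})^2+\ed{m+1}{}\ed{m}{}+(\ed{m}{})^2$ and $f=2\ed{m+1}{}\tud{m+1}{}+\ed{m+1}{}\tud{m}{}+\ed{m}{}\tud{m+1}{}+2\ed{m}{}\tud{m}{}$ (i.e.\ substitute $u=e+\tilde u$ rather than factor differences), and the $|\alpha|/2$ in $C_1$ is the coefficient on $\fa_t\|\fd_x\ed{m}{}\|^2$ coming from the three $\|\cd_x\fa_t\ed{m}{}\|^2$ terms produced by Young, not from the truncation term---the truncation contributes only the ``$+1$'' in the first slot.
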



\begin{proof}
Expanding
$\fd_t \left\|\ed{m}{} \right\|^2 = 2 \left\langle \fd_t \ed{m}{}, \fa_t \ed{m}{} \right\rangle$,
\begin{align*}
  \fd_t \ed{m}{k} &= - \dfrac{\alpha}{6} \cd_x \Biggl[ \left\{ \left( \ud{m+1}{k} \right)^2 + \ud{m+1}{k} \ud{m}{k} + \left( \ud{m}{k} \right)^2 \right\}\\
  &\hspace{13mm} - \left\{ \left( \tud{m+1}{k} \right)^2 + \tud{m+1}{k} \tud{m}{k} + \left( \tud{m}{k} \right)^2 \right\} \Biggr] + \beta \cd_x \cdd_x \fa_t \ed{m}{k} - \td{m}{k}\\
  &= - \dfrac{\alpha}{6} \cd_x \left\{ \left( \ed{m+1}{k} \right)^2 + \ed{m+1}{k} \ed{m}{k} + \left( \ed{m}{k} \right)^2 \right\}\\
  &\hspace{5mm} - \dfrac{\alpha}{6} \cd_x \left( 2\ed{m+1}{k} \tud{m+1}{k} + \ed{m+1}{k} \tud{m}{k} + \ed{m}{k} \tud{m+1}{k} + 2 \ed{m}{k} \tud{m}{k} \right)\\
  &\hspace{10mm} + \beta \cd_x \cdd_x \fa_t \ed{m}{k} - \td{m}{k},
\end{align*}
and using straightforward bounding techniques (as outlined in Lemma~\ref{lem_ope}), the claim follows (see Appendix~\ref{app:lem:e:1} for further details).
\end{proof}

This necessitates controlling $\| \fd_x \ed{m}{} \|$. This term can be eliminated in norm-preserving cases~\cite{Wang4, Ostrovsky_norm} by leveraging the specific structure of the schemes.
A similar inequality bound was initially considered in a recent analysis in~\cite{Courtes}.
However, all terms involving $\fd_x \ed{m}{}$ were shown to be ignored due to the non-positivity of their coefficients.
The applicability of this technique to other PDEs and schemes remains uncertain.

Hence, this study introduces a novel approach fully utilizing the discrete conserved energy defined in Equation \eqref{eq_energy}:
\begin{align*}
  \mathcal{E}_\mathrm{d}(\ud{m}{}) &= \frac{\alpha}{6} \sum_{k=1}^K \left( \ud{m}{k} \right)^3 \Dx + \frac{\beta}{2} \left\| \fd_x \ud{m}{} \right\|^2.
\end{align*}
Naturally, the following can be evaluated from this definition:
\begin{align}\label{eq_errorenergy}
  \left\| \fd_x \ed{m}{} \right\|^2 +  A^{(m)},
  \quad
  \mbox{where}\
   A^{(m)} := \frac{\alpha}{3\beta} \sum_{k=1}^K \left( \ed{m}{k} \right)^3 \Dx.
\end{align}
The next lemma establishes the associated inequality.

\begin{lemma}\label{lem:e:2}
  If $\Dt \leq \Dx$, then, for $m = 0,\ldots,M_0$, the following holds,
  \begin{align*}
    &\fd_t \left\| \fd_x \ed{m}{} \right\|^2 \leq - \fd_t A^{(m)} + C_2 \fa_t \left\| \ed{m}{} \right\|_{H^1}^2 + \left( \dfrac{|\alpha|}{6|\beta|} + 1 \right) (c_0)^2 \left( (\Dt)^2+(\Dx)^2 \right)^2,
  \end{align*}
  where $c_0$ is defined in Lemma~\ref{lem_truncation}, and $C_2 = C_2( q, r) > 0$ is a constant independent of $\Dt$ and $\Dx$.
\end{lemma}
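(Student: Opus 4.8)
The plan is to differentiate $\|\fd_x\ed{m}{}\|^2$ in time using $\fd_t\|\fd_x\ed{m}{}\|^2 = 2\langle \fd_x\fd_t\ed{m}{}, \fd_x\fa_t\ed{m}{}\rangle = -2\langle \fd_t\ed{m}{}, \bd_x\fd_x\fa_t\ed{m}{}\rangle = -2\langle\fd_t\ed{m}{}, \cdd_x\fa_t\ed{m}{}\rangle$, and then substitute the error equation for $\fd_t\ed{m}{k}$ derived in the proof of Lemma~\ref{lem:e:1}. This produces three groups of terms: (a) the dispersive term $\beta\langle \cd_x\cdd_x\fa_t\ed{m}{}, \cdd_x\fa_t\ed{m}{}\rangle$, which vanishes by the skew-symmetry of $\cd_x$ (Lemma~\ref{lem_ope}(4)); (b) the truncation term $\langle \td{m}{}, \cdd_x\fa_t\ed{m}{}\rangle = \langle \fd_x\td{m}{}, \fd_x\fa_t\ed{m}{}\rangle$ (after moving one difference operator across), bounded via Lemma~\ref{lem_truncation} together with Lemma~\ref{lem_ope}(5); and (c) the nonlinear terms coming from the quadratic expression in $u$. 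The key idea, flagged in the surrounding text, is that the genuinely quadratic-in-error piece $-\frac{\alpha}{6}\cd_x\{(\ed{m+1}{})^2 + \ed{m+1}{}\ed{m}{} + (\ed{m}{})^2\}$ paired against $\cdd_x\fa_t\ed{m}{}$ is precisely what reconstructs $\fd_t A^{(m)}$ up to controllable remainder: one recognizes $\frac{\alpha}{3\beta}\fd_t\sum_k(\ed{m}{k})^3\Dx$ as (essentially) $\frac{\alpha}{\beta}\langle \fa_t[(\ed{m}{})^2\text{-type}], \fd_t\ed{m}{}\rangle$ and matches it against the cubic contribution, absorbing the mismatch into the $\fa_t\|\ed{m}{}\|_{H^1}^2$ term.

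Concretely, I would first integrate by parts to rewrite $-2\langle\fd_t\ed{m}{},\cdd_x\fa_t\ed{m}{}\rangle = -2\langle\bd_x\fd_t\ed{m}{}, \bd_x\fa_t\ed{m}{}\rangle$ wherever convenient, and move the operators so the cubic nonlinearity appears as $\frac{\alpha}{3}\langle \cd_x[(\ed{m+1}{})^2+\ed{m+1}{}\ed{m}{}+(\ed{m}{})^2], \cdd_x\fa_t\ed{m}{}\rangle$. Using $\cdd_x = \fd_x\bd_x$ and skew-symmetry, transfer one $\fd_x$ onto the quadratic factor; then the discrete Leibniz rule (Lemma~\ref{lem_ope}(3)) on $\fd_x$ of a product, combined with the elementary identity $a^2+ab+b^2 = \frac{3}{2}(\fa_t\text{-average of squares}) + \frac{1}{2}(\text{difference})^2$-style bookkeeping for the pair $(\ed{m}{},\ed{m+1}{})$, lets me extract exactly the term $-\fd_t A^{(m)}$ and leave a remainder that is a sum of discrete inner products each containing at least one factor $\ed{m}{}$ or $\ed{m+1}{}$ undifferentiated and at most one factor $\fd_x\ed{}$ or $\fd_t\ed{}$; the cross terms with $\tud{}{}$ (which are linear in error and carry a factor $\le r$ from Theorem~\ref{thm_conditional_convergence}'s hypothesis $r > \sup|u|$, plus factors of $\fd_x\tud{}{}$, $\cdd_x\tud{}{}$ that are bounded by smoothness) are handled the same way. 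Each remainder inner product is dominated by $C(q,r)\fa_t\|\ed{m}{}\|_{H^1}^2$ via Lemma~\ref{lem_ope}(5)--(6) and the Sobolev bound $\|\ed{m}{}\|_\infty \le (q+1)r$ available under the standing assumptions, after using $\Dt\le\Dx$ to absorb any stray inverse powers of $\Dx$ that arise when a $\fd_t\ed{}=(\ed{m+1}{}-\ed{m}{})/\Dt$ meets a $\cdd_x$; this last point is why the hypothesis $\Dt\le\Dx$ is needed.

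The main obstacle is the careful handling of the cubic term so that the \emph{exact} combination $\fd_t A^{(m)}$ emerges with the right constant $\frac{\alpha}{3\beta}$ and no leftover term that cannot be bounded by $\|\ed{m}{}\|_{H^1}^2$: the naive pairing yields expressions like $\langle \cd_x(\ed{m}{})^2, \cdd_x\ed{m}{}\rangle$ which are not obviously telescoping in $m$, so one must exploit that the scheme's symmetric three-point nonlinearity $(\ud{m+1}{})^2+\ud{m+1}{}\ud{m}{}+\ud{m}{}\ud{m}{}$ is designed precisely so that its discrete variational structure reproduces $\fd_t\sum_k(\cdot)^3$. I would therefore model this step on the original energy-conservation identity for the scheme \eqref{sch} (which gives \eqref{eq_energy}), transported to the error variable: the difference between the scheme applied to $\ud{}{}$ and to $\tud{}{}$ retains enough of that cancellation structure, and the defect is exactly the $\tud{}{}$-cross terms plus the truncation term, both already controlled. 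Once that identity is in hand, collecting constants gives $C_2(q,r)$ and the stated truncation coefficient $\frac{|\alpha|}{6|\beta|}+1$, completing the proof.
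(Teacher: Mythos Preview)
Your overall plan matches the paper's route: compute $\fd_t\|\fd_x\ed{m}{}\|^2$ and $\fd_t A^{(m)}$ separately, substitute the error equation into each, and observe that the quadratic-in-error factor $\bar e := (\ed{m+1}{})^2+\ed{m+1}{}\ed{m}{}+(\ed{m}{})^2$ paired with $\cd_x\cdd_x\fa_t\ed{m}{}$ cancels between the two. What survives (the paper calls it $I_1+I_2+I_3$) contains in particular $I_2=\tfrac{\alpha}{3}\langle \cd_x f,\fa_t\cdd_x\ed{m}{}\rangle$, where $f$ collects the cross terms $\ed{m+j}{}\tud{m+j'}{}$.

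The gap is in your treatment of $I_2$. Your assertion that every remaining inner product contains at most one differenced error factor is false here: after the Leibniz rule on $\cd_x f$, the piece $(\ca_x\tud{m+j}{})\,\cd_x\ed{m+j'}{}$ must still be paired with $\fa_t\cdd_x\ed{m}{}$, producing terms of the form
\[
  \bigl\langle a\,\cd_x\ed{m+j'}{},\, \fa_t\cdd_x\ed{m}{} \bigr\rangle,\qquad a=\ca_x\tud{m+j}{},
\]
which carry \emph{three} spatial differences on error factors altogether. This cannot be controlled by $\fa_t\|\ed{m}{}\|_{H^1}^2$ using only Lemma~\ref{lem_ope}(5)--(6) and an $L^\infty$ bound on $a$. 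The variational structure of the scheme has already been consumed by the $\bar e$-cancellation and contributes nothing to this linear-in-error term; nor does $\Dt\le\Dx$, since no $\fd_t$ appears in it.

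The missing ingredient is the discrete identity (equation~\eqref{eq_idea} in the paper)
\[
  \bigl\langle a\,\cd_x b,\, \cdd_x b \bigr\rangle \;=\; -\tfrac12\bigl\langle (\fd_x a)(\fd_x b),\, \fd_x b\bigr\rangle,
\]
which transfers one difference from $b$ onto the coefficient $a$ and drops the total order on $b$ from $1+2$ to $1+1$. To apply it with $b=\fa_t\ed{m}{}$, the paper first symmetrises in time via $2\tud{m+j}{}+\tud{m+1-j}{}=3\fa_t\tud{m}{}\mp\tfrac12(\tud{m+1}{}-\tud{m}{})$, so that both error factors carry the same average $\fa_t$; the residual then acquires an explicit factor $\Dt$ (from $\tud{m+1}{}-\tud{m}{}$) against $\fa_t\cdd_x\ed{m}{}$, and it is \emph{there}, bounding $\|\Dt\,\bd_x\|\le 2$, that the hypothesis $\Dt\le\Dx$ is actually used --- not in the place you indicate. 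Without identity~\eqref{eq_idea} and this time-symmetrisation step, the $H^1$ estimate for $I_2$ cannot be closed.
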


The proof and explicit form of $C_2$ are provided in Subsection~\ref{subsec:proof_lemma}.

Combining Lemma~\ref{lem:e:1} and Lemma~\ref{lem:e:2} yields:
\begin{align*}
  \fd_t \left( \left\| \ed{m}{} \right\|_{H^1}^2 + A^{(m)} \right) \leq C' \fa_t\left\| \ed{m}{} \right\|_{H^1}^2 + \left( \dfrac{|\alpha|}{6|\beta|} + 2 \right) (c_0)^2 \left( (\Dt)^2+(\Dx)^2 \right)^2,
\end{align*}
where $C' \coloneqq C_1+C_2$.
Lemma~\ref{lem_gronwall} cannot be applied because $A^{(m)}$ may take negative values.
This poses a significant difficulty in analyzing energy functions that are neither quadratic nor positive.
Hence the analysis employed a ``modified'' energy function, which compensated the negative term:
\begin{align*}
 \mathcal{E}_\mathrm{d}' &\coloneqq \theta \left\| \ed{m}{} \right\|^2 + \mathcal{E}_\mathrm{d} = \theta \left\| \ed{m}{} \right\|^2+ \left\| \fd_x \ed{m}{} \right\|^2 + A^{(m)},
\end{align*}
where $\theta > 0$ was chosen sufficiently large in the subsequent lemma to neutralize $A^{(m)}$.
The next lemma specifies the conditions under which $ \mathcal{E}_\mathrm{d}'$ is sufficient for the analysis instead of $\left\| \ed{m}{} \right\|_{H^1}^2$.


\begin{lemma}\label{lem:e:bound}
  For $m = 0,\ldots,M_0+1$, $0 \leq \left\| \ed{m}{} \right\|_{H^1}^2 \leq \mathcal{E}_\mathrm{d}'$ holds if $\theta \geq 1 +  2pr|\alpha|/3|\beta|$.
\end{lemma}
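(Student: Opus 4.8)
The plan is to prove the two claimed inequalities $0 \le \|\ed{m}{}\|_{H^1}^2$ and $\|\ed{m}{}\|_{H^1}^2 \le \mathcal{E}_\mathrm{d}'$ separately. The first is trivial since $\|\ed{m}{}\|_{H^1}^2 = \|\ed{m}{}\|^2 + \|\fd_x \ed{m}{}\|^2$ is a sum of squared norms. For the second, recall $\mathcal{E}_\mathrm{d}' = \theta \|\ed{m}{}\|^2 + \|\fd_x \ed{m}{}\|^2 + A^{(m)}$, so the inequality $\|\ed{m}{}\|_{H^1}^2 \le \mathcal{E}_\mathrm{d}'$ reduces to
\begin{align*}
  \|\ed{m}{}\|^2 + \|\fd_x \ed{m}{}\|^2 \le \theta \|\ed{m}{}\|^2 + \|\fd_x \ed{m}{}\|^2 + A^{(m)},
\end{align*}
i.e.\ to $(\theta - 1)\|\ed{m}{}\|^2 + A^{(m)} \ge 0$. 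Thus the whole task is to bound $A^{(m)}$ from below by $-(\theta-1)\|\ed{m}{}\|^2$, for which it suffices to show $|A^{(m)}| \le (\theta - 1)\|\ed{m}{}\|^2$ under the hypothesis on $\theta$.

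The key step is a pointwise cubic-versus-quadratic estimate combined with an $L^\infty$ bound on the error. First I would write $|A^{(m)}| = \frac{|\alpha|}{3|\beta|} \left| \sum_{k=1}^K (\ed{m}{k})^3 \Dx \right| \le \frac{|\alpha|}{3|\beta|} \sum_{k=1}^K |\ed{m}{k}|^3 \Dx \le \frac{|\alpha|}{3|\beta|} \|\ed{m}{}\|_\infty \sum_{k=1}^K (\ed{m}{k})^2 \Dx = \frac{|\alpha|}{3|\beta|} \|\ed{m}{}\|_\infty \|\ed{m}{}\|^2$. Now I need $\|\ed{m}{}\|_\infty \le 2r$ (or some comparable multiple of $r$): this should follow from the triangle inequality $\|\ed{m}{}\|_\infty \le \|\ud{m}{}\|_\infty + \|\tud{m}{}\|_\infty$ together with the standing assumptions — Theorem~\ref{thm_local_solvability} gives $\|\ud{m}{}\|_\infty \le qr$ (one should check the bound $\|\ud{m}{}\|_\infty \le r$ or $\le qr$ is available for all $m \le M_0+1$ under the current hypotheses), and the assumption $r > \sup_{t,x}|u(t,x)|$ gives $\|\tud{m}{}\|_\infty \le r$. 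So $\|\ed{m}{}\|_\infty$ is bounded by a constant multiple of $r$; the appearance of the factor $p$ in the statement's bound $\theta \ge 1 + 2pr|\alpha|/3|\beta|$ suggests the intended constant is such that $\|\ed{m}{}\|_\infty \le p r$ with the relevant multiplicity, though for the pure KdV case one expects $p=2$ or similar — I would match the constant to whatever the local-existence bound actually delivers. Substituting, $|A^{(m)}| \le \frac{|\alpha|}{3|\beta|} \cdot (pr) \cdot \|\ed{m}{}\|^2 = \frac{pr|\alpha|}{3|\beta|}\|\ed{m}{}\|^2 \le (\theta - 1)\|\ed{m}{}\|^2$ provided $\theta - 1 \ge pr|\alpha|/3|\beta|$; the stated hypothesis $\theta \ge 1 + 2pr|\alpha|/3|\beta|$ is even stronger, leaving room (the factor $2$ presumably absorbs the $\|\ed{m}{}\|_\infty \le 2r$-type estimate). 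Hence $(\theta-1)\|\ed{m}{}\|^2 + A^{(m)} \ge 0$, which is exactly what was needed.

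The main obstacle, such as it is, lies not in any hard analysis but in bookkeeping the $L^\infty$ bound on $\ed{m}{}$ consistently across the index range $m = 0,\ldots,M_0+1$: one must ensure the local-existence bound $\|\ud{m}{}\|_\infty \le qr$ (or $\le r$) is legitimately in force for every such $m$ under the hypotheses inherited from Theorem~\ref{thm_conditional_convergence}, since $M_0+1$ is one step beyond where some bounds were originally assumed. Once that is secured the rest is the elementary chain of inequalities above — Hölder / the crude bound $\sum |\ed{m}{k}|^3 \Dx \le \|\ed{m}{}\|_\infty \|\ed{m}{}\|^2$ — and matching constants so that the coefficient $pr|\alpha|/3|\beta|$ (or its doubled version) is dominated by $\theta - 1$.
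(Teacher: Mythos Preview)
Your proposal is correct and follows essentially the same route as the paper: reduce to $(\theta-1)\|\ed{m}{}\|^2 + A^{(m)} \ge 0$, bound $|A^{(m)}| \le \frac{|\alpha|}{3|\beta|}\|\ed{m}{}\|_\infty \|\ed{m}{}\|^2$, and control $\|\ed{m}{}\|_\infty$ via the triangle inequality using $\|\ud{m}{}\|_\infty \le qr$ (valid up to $m=M_0+1$ by Theorem~\ref{thm_local_solvability}) and $\|\tud{m}{}\|_\infty \le r$. The ``$p$'' in the statement is a typo for ``$q$''; the paper's proof uses $\|\ed{m}{}\|_\infty \le 2qr$, which resolves your uncertainty about the constant.
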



\begin{proof}
  Note that $\left\| \ed{m}{} \right\|_\infty \leq \left\| \ud{m}{} \right\|_\infty + \left\| \tud{m}{} \right\|_\infty \leq 2 q r$ holds by the assumption.
  Hence, the following inequality holds:
  \begin{align*}
    \left\vert A^{(m)} \right\vert \leq \dfrac{|\alpha|}{3|\beta|} \left\| \ed{m}{} \right\|_\infty \left\| \ed{m}{} \right\|^2 \leq \dfrac{|\alpha|}{3|\beta|} \cdot 2 q r \left\| \ed{m}{} \right\|^2 ,
  \end{align*}
  which implies:
  \begin{align*}
    \mathcal{E}_\mathrm{d}' \geq \theta \left\| \ed{m}{} \right\|^2 + \left\| \fd_x \ed{m}{} \right\|^2 - \dfrac{2|\alpha|}{3|\beta|} q r \left\| \ed{m}{} \right\|^2 \geq \left\| \ed{m}{} \right\|_{H^1}^2 \geq 0.
  \end{align*}
\end{proof}


The proof of Theorem~\ref{thm_conditional_convergence} can now be completed. We have
\begin{align*}
  \fd_t \mathcal{E}_\mathrm{d}' &= \theta \cdot \fd_t \left\| \ed{m}{} \right\|^2 + \fd_t \left( \left\| \fd_x \ed{m}{} \right\|^2 + A^{(m)} \right)\\
  &\leq \left( \theta C_1 + C_2 \right) \fa_t \left\| \ed{m}{} \right\|_{H^1}^2 + \left( \dfrac{|\alpha|}{6|\beta|} + \theta \right) (c_0)^2 \left( (\Dt)^2+(\Dx)^2 \right)^2\\
  &\leq \left( \theta C_1 + C_2 \right) \fa_t \mathcal{E}_\mathrm{d}' + \left( \dfrac{|\alpha|}{6|\beta|} + \theta \right) (c_0)^2 \left( (\Dt)^2+(\Dx)^2 \right)^2
\end{align*}
for $m = 0,\ldots,M_0$.
Therefore, Lemma~\ref{lem_gronwall} for $m = 0,\ldots,M_0+1$ implies:
\begin{align*}
  \left\| \ed{m}{} \right\|_{H_1}^2 \leq \mathcal{E}_\mathrm{d}' &\leq
  \left[
  2 \left( \dfrac{|\alpha|}{6|\beta|} + \theta \right) (c_0)^2T
  \mathrm{exp} \left( 2 \left( \theta C_1 + C_2 \right) T \right)
  \right]
  \left( (\Dt)^2 + (\Dx)^2 \right)^2
\end{align*}
holds for sufficiently small $\Dt \leq \Dx$,
where $C > 0$ is the constant within $[\,\cdot\,]$.

\subsection{An a priori estimate on $\|\ud{m+1}{}\|$}

We can derive a priori estimate for $\left\| \ud{m+1}{} \right\|_\infty$ from the conditional convergence, enabling the use of an induction argument.

\begin{corollary}[a priori estimate]\label{cor_apriori}
  Under the assumption of Theorem~\ref{thm_conditional_convergence}, if
  \begin{align} \label{res_dx}
    \Dx \leq \left( \dfrac{ r - \sup_{(t,x)\in[0,T]\times [0,L]}|u(t,x)|}{2\hat{L}C( q, r)} \right)^{1/2},
  \end{align}
  where $\hat{L} \coloneqq \sqrt{2} \max\{\sqrt{L}, 1/\sqrt{L} \}$, then, $\left\| \ud{m}{} \right\|_\infty \leq r$ for $m = 0, 1, \ldots, M_0+1$.
\end{corollary}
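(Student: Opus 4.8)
The plan is to combine the conditional convergence estimate of Theorem~\ref{thm_conditional_convergence} with the discrete Sobolev inequality (Lemma~\ref{lem_Sobolev}) to convert the $H^1$-error bound into an $L^\infty$-bound on the error, and then use the triangle inequality $\|\ud{m}{}\|_\infty \le \|\ed{m}{}\|_\infty + \|\tud{m}{}\|_\infty$ to deduce $\|\ud{m}{}\|_\infty \le r$. First I would invoke Theorem~\ref{thm_conditional_convergence}, whose hypotheses are exactly the standing assumptions here, to obtain, for $m = 0, 1, \ldots, M_0+1$,
\begin{align*}
  \left\| \ed{m}{} \right\|_{H^1} \leq C(q,r) \left( (\Dt)^2 + (\Dx)^2 \right).
\end{align*}
Applying Lemma~\ref{lem_Sobolev} to $\ed{m}{}$ gives $\|\ed{m}{}\|_\infty \le \hat{L}\,\|\ed{m}{}\|_{H^1} \le \hat{L}\,C(q,r)\,((\Dt)^2 + (\Dx)^2)$.

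Next, since the standing hypothesis already stipulates $\Dt \le \Dx$, we have $(\Dt)^2 + (\Dx)^2 \le 2(\Dx)^2$, so $\|\ed{m}{}\|_\infty \le 2\hat{L}\,C(q,r)\,(\Dx)^2$. Now I would substitute the assumed restriction~\eqref{res_dx} on $\Dx$: squaring it yields $(\Dx)^2 \le \bigl(r - \sup_{(t,x)}|u(t,x)|\bigr)/\bigl(2\hat{L}C(q,r)\bigr)$, so that $\|\ed{m}{}\|_\infty \le r - \sup_{(t,x)\in[0,T]\times[0,L]}|u(t,x)|$. Finally, the triangle inequality gives
\begin{align*}
  \left\| \ud{m}{} \right\|_\infty \le \left\| \ed{m}{} \right\|_\infty + \left\| \tud{m}{} \right\|_\infty \le \left( r - \sup_{(t,x)}|u(t,x)| \right) + \sup_{(t,x)}|u(t,x)| = r,
\end{align*}
using $\|\tud{m}{}\|_\infty = \max_k |u(m\Dt, k\Dx)| \le \sup_{(t,x)\in[0,T]\times[0,L]}|u(t,x)|$, which holds because the grid points lie in $[0,T]\times[0,L]$ (up to periodicity in $x$).

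There is essentially no analytic obstacle here; the corollary is a bookkeeping consequence of the two preceding results, and its role is to close the inductive loop described in Subsection~\ref{sec:out}. The only point deserving care is the logical structure of the induction: Theorem~\ref{thm_conditional_convergence} (and hence this corollary) is stated \emph{under the assumptions of Theorem~\ref{thm_local_solvability} for $m = M_0$}, i.e.\ presupposing a bound $\|\ud{m'}{}\|_\infty \le r$ for $m' \le M_0$ with some fixed $q > 1$; the corollary then upgrades this to $\|\ud{m}{}\|_\infty \le r$ (the non-expansive bound, without the factor $q$) for $m$ up to $M_0+1$. One should also note that the restriction~\eqref{res_dx} is meaningful only when $r > \sup_{(t,x)}|u|$, which is part of the hypothesis of Theorem~\ref{thm_conditional_convergence}, so the right-hand side of~\eqref{res_dx} is a positive real number and the constraint is non-vacuous. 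I would present the argument in the three short displayed steps above, remarking explicitly that this is what powers the passage from local to global existence.
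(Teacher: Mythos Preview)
Your proposal is correct and follows essentially the same approach as the paper's own proof: triangle inequality, discrete Sobolev lemma, the bound $(\Dt)^2+(\Dx)^2\le 2(\Dx)^2$ from $\Dt\le\Dx$, and then the restriction~\eqref{res_dx}. The only cosmetic difference is that the paper remarks that it suffices to treat $m=M_0+1$ (since $\|\ud{m'}{}\|_\infty\le r$ for $m'\le M_0$ is already part of the standing hypothesis), whereas you run the estimate uniformly over all $m\le M_0+1$; both are fine.
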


\begin{proof}
  Given this assumption, we only need to consider the case $m = M_0+1$. Applying the discrete Sobolev Lemma (Lemma~\ref{lem_Sobolev}),
  \begin{align*}
    \left\| \ud{M_0+1}{} \right\|_\infty \leq \left\| \ed{M_0+1}{} \right\|_\infty + \left\| \tud{M_0+1}{} \right\|_\infty &\leq \hat{L} \left\| \ed{M_0+1}{} \right\|_{H^1} + \sup_{t,x}|u(t,x)|\\
    &\leq 2 \hat{L} C( q, r) (\Dx)^2 + \sup_{t,x}|u(t,x)|.
  \end{align*}
  The right-hand side of the inequality is bounded by $r$ using $\sup|u|<r$ from Theorem~\ref{thm_conditional_convergence}, provided that $\Dx$ is sufficiently small, as specified in the statement.
\end{proof}

\subsection{Global results by induction}

Now, we can present the desired global results.

\begin{theorem}[global existence and convergence]
  Let $ q > 1$.
  For any $T>0$,
  suppose $N$ is chosen sufficiently large such that
  $\Dx \leq \min \left\{ \sqrt{ r/4 \hat{L} C( q, r) }, 1 \right\}$,
  where $r \coloneqq 2 \sup_{(t,x)\in[0,T]\times [0,L]} |u(t,x)|$.
  Some $\overline{\Dt}>0$ exists,
  and for any $\Dt < \overline{\Dt}$,
  the scheme has a unique solution $\ud{m}{}$ $(m = 1, 2, \ldots, M)$, satisfying
  \begin{align*}
    \left\| \ud{m}{} \right\|_\infty \leq r, \quad \left\| \ed{m}{} \right\|_{H^1} \leq C( q, r) \left( (\Dt)^2 + (\Dx)^2 \right) \qquad (m = 1, 2, \ldots, M).
  \end{align*}

\end{theorem}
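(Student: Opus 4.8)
The plan is to run the three-step machinery (Theorem~\ref{thm_local_solvability}, Theorem~\ref{thm_conditional_convergence}, Corollary~\ref{cor_apriori}) inside an induction on $m$, with the induction hypothesis being the pair of statements $\left\| \ud{m'}{} \right\|_\infty \leq r$ for all $m' \leq m$ and $\left\| \ed{m'}{} \right\|_{H^1} \leq C(q,r)\left( (\Dt)^2 + (\Dx)^2 \right)$ for all $m' \leq m$. With $q = 2$ fixed (any $q>1$ works; the choice $r = 2\sup|u|$ makes $q=2$ natural since then $qr$-type bounds stay comparable to $\sup|u|$), and $r \coloneqq 2\sup_{(t,x)\in[0,T]\times[0,L]}|u(t,x)|$, the base case $m=0$ holds because $\ud{0}{} = \tud{0}{}$ so $\ed{0}{} = 0$ and $\left\| \ud{0}{} \right\|_\infty = \sup_x |u_0(x)| \leq \sup|u| < r$. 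First I would fix $N$ (equivalently $\Dx$) as in the statement so that $\Dx \leq \min\{ \sqrt{r/4\hat{L}C(q,r)}, 1 \}$; note this is exactly the hypothesis of Corollary~\ref{cor_apriori} with $\sup|u| = r/2$, since $(r - r/2)/(2\hat{L}C) = r/(4\hat{L}C)$. Then I would define $\overline{\Dt}$ as an infimum of the threshold quantities $\min\{\varepsilon_1(q,r,\Dx),\varepsilon_2(q,r,\Dx)\}$ from Theorem~\ref{thm_local_solvability} together with the ``sufficiently small $\Dt \leq \Dx$'' thresholds implicit in Theorem~\ref{thm_conditional_convergence} and Lemma~\ref{lem_gronwall}; crucially, since $r$ and $\Dx$ are now both fixed, these thresholds do not depend on $m$, so $\overline{\Dt} > 0$ is a single positive number and every $\Dt < \overline{\Dt}$ is admissible at every step.

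The inductive step proceeds as follows. Assume the hypothesis holds up to index $m = M_0$ with $0 \leq M_0 < M$. Since $r \geq \max_{m'\leq M_0}\left\| \ud{m'}{} \right\|_\infty$ and $\Dt < \overline{\Dt} \leq \min\{\varepsilon_1,\varepsilon_2\}$, Theorem~\ref{thm_local_solvability} yields the unique existence of $\ud{M_0+1}{}$ with $\left\| \ud{M_0+1}{} \right\|_\infty \leq qr$; this is the expansive bound. Next, the assumptions of Theorem~\ref{thm_conditional_convergence} are met (the theorem needs precisely the hypotheses of Theorem~\ref{thm_local_solvability} at $m = M_0$ plus $r > \sup|u|$, which holds since $r = 2\sup|u|$), so for $m = 0,\ldots,M_0+1$ we get $\left\| \ed{m}{} \right\|_{H^1} \leq C(q,r)\left( (\Dt)^2+(\Dx)^2 \right)$. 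Finally, Corollary~\ref{cor_apriori} applies because the restriction~\eqref{res_dx} on $\Dx$ is exactly what we arranged when fixing $N$, giving $\left\| \ud{m}{} \right\|_\infty \leq r$ for $m = 0,\ldots,M_0+1$ --- in particular the expansive bound $qr$ on $\ud{M_0+1}{}$ is replaced by the non-expansive $r$. This closes the induction: both statements now hold up to $M_0+1$. Iterating up to $M_0 = M-1$ delivers the conclusion for all $m = 1,\ldots,M$.

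The main obstacle --- and the point requiring the most care --- is the \emph{uniformity} of $\overline{\Dt}$ across the induction: one must check that the convergence constant $C(q,r)$ produced by Theorem~\ref{thm_conditional_convergence} (built from $C_1$, $C_2$, $\theta$, $c_0$, and $T$ via the Gronwall estimate) genuinely depends only on $q$, $r$, $T$, and the fixed PDE data, and \emph{not} on $M_0$ or on the particular numerical solution; otherwise the threshold could shrink to zero as $m$ grows, exactly the ``local existence'' pathology described in the introduction. This is guaranteed because at each step $M_0$ the input to Theorem~\ref{thm_conditional_convergence} is the \emph{same} bound $r$ (never the inflated $qr$, thanks to Corollary~\ref{cor_apriori} closing the loop at the previous step), so $C_1, C_2, \theta$ are literally the same constants every time. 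A secondary subtlety is the ordering within the step: Corollary~\ref{cor_apriori} must be invoked \emph{after} Theorem~\ref{thm_conditional_convergence}, and Theorem~\ref{thm_conditional_convergence} only needs the $L^\infty$-bound at level $M_0$ (available from the hypothesis) together with the local existence of $\ud{M_0+1}{}$ --- so there is no circularity. Once this bookkeeping is arranged, no new estimates are needed; the theorem is a clean assembly of the three preceding results.
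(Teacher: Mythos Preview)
Your proposal is correct and follows essentially the same approach as the paper: fix the uniform thresholds $\overline{\Dt}$ (via $\varepsilon_1,\varepsilon_2$ and the $\Dt\le\Dx$ condition) and the $\Dx$-restriction from Corollary~\ref{cor_apriori}, verify that the constants $C_1,C_2,\theta,c_0$ in $C(q,r)$ depend only on $q,r,T$ and the exact solution (not on $M_0$), and then run induction on $m$ using Theorem~\ref{thm_local_solvability} $\to$ Theorem~\ref{thm_conditional_convergence} $\to$ Corollary~\ref{cor_apriori} to pass from the expansive bound $qr$ back to $r$. One small note: the theorem statement takes $q>1$ as given, so there is no need to fix $q=2$; your argument already works verbatim for any $q>1$.
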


\begin{proof}
  We chose $\Dt$ smaller than
  $\min\{ \varepsilon_1( q, r, \Dx), \varepsilon_2( q, r, \Dx), \Dx\}$ based on the assumptions in Theorem~\ref{thm_local_solvability} and Theorem~\ref{thm_conditional_convergence}, where $\varepsilon_1, \varepsilon_2, C$ are constants derived from the previous theorems.
  The constants $C(q,r)$, detailed at the end of Section~\ref{sec:con_con}, depend on $c_0, C_1, C_2, \theta, T$ as $T\to\infty$.
  Among these, $C_1(q,r)$ (Lemma~\ref{lem:e:1}) and $\theta$ (Lemma~\ref{lem:e:bound}) remain constant,
  while $c_0$ (Lemma~\ref{lem_truncation}) and $C_2(q,r)$ (Subsection~\ref{subsec:proof_lemma}) may vary bounded by the supremums of (the absolute values of) $u(t,x)$ or its derivatives on $[0,T]\times [0,L]$.
  Therefore, if the target solution is global, these quantities remain bounded.
  Consequently, both $r$ and $C$ can grow (both depending on the growing $T$ and $ r=2\sup|u|$) as $T\to\infty$
  However, they are finite constants for any finite $T$.
  Hence, $\varepsilon_1$, $\varepsilon_2$, and $\min \left\{ \sqrt{ r/(4\hat{L}C)}, 1 \right\}$ are positive constants.
  This implies the existence of some $\overline{\Dt}>0$ smaller than these constants, enabling to choose an arbitrary $\Dt  < \overline{\Dt}$.

  The rest of the proof proceeds by induction.
  We have $\left\| \ud{0}{} \right\|_\infty \leq \sup_{x} |u(0,x)| < r$ for the initial condition.
  Hence, $\ud{1}{}$ exists uniquely and satisfy $\left\| \ud{1}{} \right\|_\infty < q r$ by Theorem~\ref{thm_local_solvability}.
  Therefore, convergence holds for $m=0,1$ by Theorem~\ref{thm_conditional_convergence} and Corollary~\ref{cor_apriori}, and we obtained $\left\| \ud{1}{} \right\|_\infty < r$.
  This process can be repeated for $m=1, 2, \ldots, M$.
\end{proof}

\begin{remark}
  The factor of $2$ in $r \coloneqq 2 \sup_{t,x} |u(t,x)|$ is arbitrary because the results in Theorem~\ref{thm_local_solvability} and Corollary~\ref{cor_apriori} hold regardless of this choice.
  Let us denote this factor by $d$, i.e., $r_0 \coloneqq \sup_{t,x} |u(t,x)|$ and $r = d r_0$.
  The condition for $\Dx$ becomes $ \Dx \leq \min \left\{ \sqrt{ (d-1)r_0/2 \hat{L} C( q, r) }, 1 \right\} $ in Corollary~\ref{cor_apriori}.
  If $d\to 1$, sharper estimates for $\|\ud{m}{}\|_{\infty}$ can be achieved.
  However, this imparts a stricter restriction on $\Dx$ due to the imposed condition.
  Conversely, the limit $d\to\infty$ would relax the restriction on $\Dx$.
  Nevertheless, the final outcome remains effectively unchanged since $C_1, C_2$ in $C(q,r)$ scale as $O(r^2)$.
  This trade-off implies the existence of an optimal value for $d$ that maximizes the upper bound of $\Dx$.
  However, further exploration of this optimal factor is beyond the scope of this discussion.
\end{remark}

\subsection{Generalization to other PDEs} \label{subsec:appl}

The proof approach outlined in Theorem~\ref{thm_conditional_convergence} applies to other PDEs and their conservative schemes.

\subsubsection{Ostrovsky case}

An energy-conservative scheme for the Ostrovsky equation is presented in \cite{Ostrovsky_sch}:
\begin{align}
  \begin{split}
    \fd_t \ud{m}{k} &= - \dfrac{\alpha}{6} \cd_x \left\{ \left( \ud{m+1}{k} \right)^2 + \ud{m+1}{k} \ud{m}{k} + \left( \ud{m}{k} \right)^2 \right\}\\
    &\hspace{10mm} + \beta \cd_x \cdd_x \fa_t \ud{m}{k} + \gamma \cd_x (\fdi)^2 \fa_t \ud{m}{k},
  \end{split}
\end{align}
where $\fdi$ denotes a discrete generalized inverse operator (see~\cite{Ostrovsky_sch} for its definition).
The associated energy is given by:
\begin{align*}
    \mathcal{E}(\ud{m}{}) \coloneqq \dfrac{\alpha}{3\beta} \sum_{k=1}^K \left( \ud{m}{k} \right)^3 \Dx + \left\| \fd_x \ud{m}{} \right\|^2 + \dfrac{\gamma}{\beta} \left\| \fdi \ud{m}{} \right\|^2.
\end{align*}
When $\gamma=0$, the equation reduces to the standard KdV case.
Let us define $A^{(m)} = \dfrac{\alpha}{3\beta} \sum_k \left( \ed{m}{k} \right)^3 \Dx + \dfrac{\gamma}{\beta} \left\| \fdi \ed{m}{}\right\|^2 $ for $\gamma \neq 0$.
Using the estimate $\left\| \fdi \right\| \leq L/4$ (see \cite{Ostrovsky_norm}), we obtained:
\begin{align}
\left\vert A^{(m)} \right\vert
  \leq \dfrac{32|\alpha| q r + 3|\gamma| L^2 }{48|\beta|}  \left\| \ed{m}{} \right\|^2   .
\end{align}
After that, Lemma~\ref{lem:e:bound} is replaced by the following result ($\mathcal{E}_\mathrm{d}'$ is defined accordingly):

\begin{lemma}[Lemma~\ref{lem:e:bound} for the Ostrovsky case]
  If $\Dt \leq \Dx $, then for $m = 0,\ldots,M_0+1$, the inequality $0 \leq \left\| \ed{m}{} \right\|_{H^1}^2 \leq \mathcal{E}_\mathrm{d}'$ holds if $\theta \geq 1 +  \dfrac{32|\alpha| q r + 3|\gamma| L^2 }{48|\beta|}$.
\end{lemma}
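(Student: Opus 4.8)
The plan is to mimic the proof of Lemma~\ref{lem:e:bound} almost verbatim, the only new feature being the extra nonlocal term $\tfrac{\gamma}{\beta}\|\fdi\ed{m}{}\|^2$ now present in $A^{(m)}$. First I would record the pointwise error bound: since the numerical solutions satisfy $\|\ud{m}{}\|_\infty \leq qr$ for $m = 0,\ldots,M_0+1$ (from the local existence and a priori estimates, exactly as in the KdV case) and $\|\tud{m}{}\|_\infty \leq \sup_{t,x}|u(t,x)| < r \leq qr$, the triangle inequality gives $\|\ed{m}{}\|_\infty \leq 2qr$ in the same range. Next I would bound $A^{(m)}$ from above and below. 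For the cubic part, $\bigl|\tfrac{\alpha}{3\beta}\sum_{k} (\ed{m}{k})^3\Dx\bigr| \leq \tfrac{|\alpha|}{3|\beta|}\|\ed{m}{}\|_\infty\|\ed{m}{}\|^2 \leq \tfrac{2|\alpha|qr}{3|\beta|}\|\ed{m}{}\|^2$; for the nonlocal part I would invoke the operator-norm estimate $\|\fdi\|\leq L/4$ from~\cite{Ostrovsky_norm}, which gives $\tfrac{|\gamma|}{|\beta|}\|\fdi\ed{m}{}\|^2 \leq \tfrac{|\gamma|L^2}{16|\beta|}\|\ed{m}{}\|^2$. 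Adding the two and reducing to the common denominator $48|\beta|$ yields exactly the displayed bound $|A^{(m)}| \leq \tfrac{32|\alpha|qr+3|\gamma|L^2}{48|\beta|}\|\ed{m}{}\|^2$.

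To conclude, I would write out $\mathcal{E}_\mathrm{d}' = \theta\|\ed{m}{}\|^2 + \|\fd_x\ed{m}{}\|^2 + A^{(m)}$, use $A^{(m)} \geq -|A^{(m)}|$, and deduce $\mathcal{E}_\mathrm{d}' \geq \bigl(\theta - \tfrac{32|\alpha|qr+3|\gamma|L^2}{48|\beta|}\bigr)\|\ed{m}{}\|^2 + \|\fd_x\ed{m}{}\|^2$. Under the hypothesis $\theta \geq 1 + \tfrac{32|\alpha|qr+3|\gamma|L^2}{48|\beta|}$ the coefficient of $\|\ed{m}{}\|^2$ is at least $1$, so the right-hand side dominates $\|\ed{m}{}\|^2 + \|\fd_x\ed{m}{}\|^2 = \|\ed{m}{}\|_{H^1}^2 \geq 0$, which is the assertion. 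Note that the hypothesis $\Dt\leq\Dx$ plays no role in this particular estimate; it is retained merely so that this lemma slots into the same Gronwall step as in the KdV analysis.

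I do not expect a genuine obstacle, since the argument is a one-line modification of Lemma~\ref{lem:e:bound}; the added difficulty contributed by the nonlocal term is absorbed cleanly by enlarging $\theta$. The only point deserving a word of care is that $\fdi$, and hence the bound $\|\fdi\|\leq L/4$, is meaningful only on zero-mean discrete functions; I would therefore note that $\ed{m}{}$ inherits this property from the discrete mass conservation of the Ostrovsky scheme together with the zero-mean constraint satisfied by the exact Ostrovsky solution, which legitimizes applying that estimate to $\ed{m}{}$.
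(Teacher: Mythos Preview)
Your proposal is correct and follows exactly the approach the paper intends: bound the cubic part of $A^{(m)}$ via $\|\ed{m}{}\|_\infty\le 2qr$ and the nonlocal part via $\|\fdi\|\le L/4$, combine to get $|A^{(m)}|\le\tfrac{32|\alpha|qr+3|\gamma|L^2}{48|\beta|}\|\ed{m}{}\|^2$, and then repeat the coercivity step from Lemma~\ref{lem:e:bound}. Your observation that $\Dt\le\Dx$ is unused here, and your care about the zero-mean hypothesis for $\fdi$, are both apt and go slightly beyond what the paper spells out.
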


Additionally, Lemma~\ref{lem_truncation} (local truncation error) and Lemma~\ref{lem:e:2} (preliminary $L^2$-error estimate) must be adjusted, which can be done straightforwardly.
These modifications yield similar global solvability and convergence results.

\subsubsection{Generalized KdV case}

Using the DVDM approach~\cite{DVDM}, the following scheme was obtained:
\begin{align}
  \fd_t \ud{m}{k} = - \dfrac{\alpha}{p(p+1)} \cd_x \left\{
    \dfrac{ \left( \ud{m+1}{k} \right)^{ p + 1} -  \left( \ud{m}{k} \right)^{p+1}} { \ud{m+1}{k} - \ud{m}{k}}
    \right\}
    + \beta \cd_x \cdd_x \fa_t \ud{m}{k} ,
\end{align}
which preserves
\begin{align*}
    \mathcal{E}(\ud{m}{}) \coloneqq - \dfrac{2\alpha}{p(p+1)\beta} \sum_{k=1}^K \left( \ud{m}{k} \right)^{p+1} \Dx + \left\| \fd_x \ud{m}{} \right\|^2 .
\end{align*}
Setting $A^{(m)} = -\dfrac{2\alpha}{p( p+1)\beta} {\displaystyle \sum_{k=1}^K} \left( \ed{m}{} \right)^{p+1} \Dx $, we obtained:
\begin{align}
   \left\vert A^{(m)} \right\vert
  \leq \dfrac{2^p |\alpha| q^{ p-1} r^{ p-1}}{ p ( p + 1)|\beta|}  \left\| \ed{m}{} \right\|^2 .
\end{align}
The corresponding replacement for Lemma~\ref{lem:e:bound} is as follows:

\begin{lemma}[Lemma~\ref{lem:e:bound} for the generalized KdV case]
  If $\Dt \leq \Dx$, then for $m = 0,\ldots,M_0+1$, $0 \leq \left\| \ed{m}{} \right\|_{H^1}^2 \leq \mathcal{E}_\mathrm{d}'$ holds if $\theta \geq 1 +  \dfrac{2^p|\alpha| q^{ p-1} r^{ p-1}}{ p( p + 1)|\beta|}$.
\end{lemma}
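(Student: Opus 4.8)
The plan is to reproduce the argument of Lemma~\ref{lem:e:bound} almost verbatim, with the cubic term replaced by the degree-$(p+1)$ term
\[
  A^{(m)} = -\frac{2\alpha}{p(p+1)\beta}\sum_{k=1}^K \left( \ed{m}{k} \right)^{p+1}\Dx .
\]
First I would record the pointwise error bound. Under the assumptions carried over from Theorem~\ref{thm_local_solvability} through Theorem~\ref{thm_conditional_convergence}, one has $\left\| \ud{m}{} \right\|_\infty \le qr$ for every $m$ in the relevant range $0,\ldots,M_0+1$, while $\left\| \tud{m}{} \right\|_\infty \le \sup_{t,x}|u(t,x)| < r$; hence, by the triangle inequality and $q>1$,
\[
  \left\| \ed{m}{} \right\|_\infty \le \left\| \ud{m}{} \right\|_\infty + \left\| \tud{m}{} \right\|_\infty \le qr + r \le 2qr .
\]

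Next I would estimate $|A^{(m)}|$. Bringing the absolute value inside the sum and factoring out $p-1$ powers of the sup-norm --- legitimate since $p\in\NN$, with the case $p=1$ trivial because the exponent $p-1$ then vanishes --- gives
\[
  \left\vert A^{(m)} \right\vert
  \le \frac{2|\alpha|}{p(p+1)|\beta|}\sum_{k=1}^K \left| \ed{m}{k} \right|^{p+1}\Dx
  \le \frac{2|\alpha|}{p(p+1)|\beta|}\left\| \ed{m}{} \right\|_\infty^{p-1}\left\| \ed{m}{} \right\|^2
  \le \frac{2^p|\alpha|\,q^{p-1}r^{p-1}}{p(p+1)|\beta|}\left\| \ed{m}{} \right\|^2 ,
\]
where the middle step uses $\left| \ed{m}{k} \right|^{p+1} = \left| \ed{m}{k} \right|^{p-1}\left| \ed{m}{k} \right|^2 \le \left\| \ed{m}{} \right\|_\infty^{p-1}\left| \ed{m}{k} \right|^2$ together with $\sum_k \left| \ed{m}{k} \right|^2\Dx = \left\| \ed{m}{} \right\|^2$, and the last step uses the bound just obtained; this is exactly the estimate displayed before the lemma. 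Since $\mathcal{E}_\mathrm{d}' = \theta\left\| \ed{m}{} \right\|^2 + \left\| \fd_x \ed{m}{} \right\|^2 + A^{(m)}$, it follows that
\[
  \mathcal{E}_\mathrm{d}' \ge \left( \theta - \frac{2^p|\alpha|\,q^{p-1}r^{p-1}}{p(p+1)|\beta|} \right)\left\| \ed{m}{} \right\|^2 + \left\| \fd_x \ed{m}{} \right\|^2 ,
\]
and the hypothesis $\theta \ge 1 + \frac{2^p|\alpha|q^{p-1}r^{p-1}}{p(p+1)|\beta|}$ makes the coefficient of $\left\| \ed{m}{} \right\|^2$ at least $1$, whence $\mathcal{E}_\mathrm{d}' \ge \left\| \ed{m}{} \right\|^2 + \left\| \fd_x \ed{m}{} \right\|^2 = \left\| \ed{m}{} \right\|_{H^1}^2 \ge 0$, which is the claim.

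I do not expect a genuine obstacle in this particular statement: it is a routine adaptation of Lemma~\ref{lem:e:bound}. The only points needing mild care are the exponent bookkeeping in the elementary inequality $\sum_k \left| \ed{m}{k} \right|^{p+1}\Dx \le \left\| \ed{m}{} \right\|_\infty^{p-1}\left\| \ed{m}{} \right\|^2$ and the observation that $\left( \ed{m}{k} \right)^{p+1}$ can change sign when $p$ is even, so one must pass to absolute values rather than invoke any definiteness; the condition $\Dt \le \Dx$ in the statement plays no role in this proof and is retained only for uniformity with the neighbouring lemmas. The substantive work in the generalized-KdV case lies elsewhere, namely in re-deriving the analogues of Lemma~\ref{lem_truncation} and Lemma~\ref{lem:e:2}, where the nonlinear difference quotient $\bigl( (\ud{m+1}{k})^{p+1} - (\ud{m}{k})^{p+1} \bigr)/(\ud{m+1}{k} - \ud{m}{k})$ must be Taylor-expanded and bounded in terms of $q$ and $r$; as already indicated in the text, these adjustments are routine and are not part of the present claim.
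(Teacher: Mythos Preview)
Your proof is correct and follows the paper's approach exactly: the paper displays the bound $\left\vert A^{(m)} \right\vert \le \dfrac{2^p|\alpha|q^{p-1}r^{p-1}}{p(p+1)|\beta|}\left\| \ed{m}{} \right\|^2$ and then states the lemma as an immediate adaptation of Lemma~\ref{lem:e:bound}, which is precisely what you spell out. Your remark that the hypothesis $\Dt\le\Dx$ is not actually used here is also accurate.
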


Lemma~\ref{lem_truncation} and Lemma~\ref{lem:e:2} must be adjusted, but these modifications are straightforward due to the $\|\ed{m}{}\|_{\infty}$ bound and the manageable treatment of polynomial nonlinearities.
These adjustments yield the desired global results.

\subsection{Proof of~Lemma\ref{lem:e:2}} \label{subsec:proof_lemma}

\begin{proof}
  Let us consider the temporal difference ($\fd_t$) of the quantity in Equation~\eqref{eq_errorenergy}.
  Its first term is expressed as:
  \begin{align*}
    \fd_t \left\| \fd_x \ed{m}{} \right\|^2 &= 2 \left\langle \fd_t \fd_x \ed{m}{}, \fa_t \fd_x \ed{m}{} \right\rangle\\
    \begin{split}
      &= -\dfrac{\alpha}{3} \left\langle \bar{e}^{(m+1/2)}, \fa_t \cd_x \cdd_x \ed{m}{} \right\rangle\\
      &\hspace{4mm} + \dfrac{\alpha}{3} \left\langle \cd_x f^{(m+1/2)}, \fa_t \cdd_x \ed{m}{} \right\rangle - 2\left\langle \fd_x \td{m}{}, \fa_t \fd_x \ed{m}{} \right\rangle,
    \end{split}
  \end{align*}
  where the following notations are adopted:
  \begin{align*}
    \bar{e} &\coloneqq \bar{e}^{(m+1/2)} \coloneqq \left( \ed{m+1}{} \right)^2 + \ed{m+1}{} \ed{m}{} + \left( \ed{m}{} \right)^2,\\
    f &\coloneqq f^{(m+1/2)} \coloneqq 2\ed{m+1}{} \tud{m+1}{} + \ed{m+1}{} \tud{m}{} + \ed{m}{} \tud{m+1}{} + 2 \ed{m}{} \tud{m}{}.
  \end{align*}
  The Hadamard product $(v \ast w)_k := v_kw_k$ is abbreviated as $v\ast w = vw$ and $v\ast v=v^2$ for notational simplicity.
  The skew-symmetry property simplifies the computation: $\left\langle \cd_x \cdd_x \fa_t \ed{m}{}, \cdd_x \fa_t \ed{m}{} \right\rangle = 0$.
  The second term is evaluated as follows:
  \begin{align*}
    \fd_t \sum_{k=1}^K \left( \ed{m}{k} \right)^3 \Dx &= \left\langle \fd_t \ed{m}{}, \bar{e}^{(m+1/2)} \right\rangle\\
    &= - \dfrac{\alpha}{6} \left\langle \cd_x f, \bar{e} \right\rangle + \beta \left\langle \cd_x \cdd_x \fa_t \ed{m}{}, \bar{e} \right\rangle - \left\langle \td{m}{}, \bar{e} \right\rangle,
  \end{align*}
  which similarly used skew-symmetry to eliminate redundant terms.

  Combining these evaluations, we derived the following expression:
  \begin{align*}
    \fd_t \left( \left\| \fd_x \ed{m}{} \right\|^2 + \dfrac{\alpha}{3\beta} \sum_{k=1}^K \left( \ed{m}{k} \right)^3 \Dx \right)
    &= \underbrace{- \dfrac{\alpha^2}{18\beta} \left\langle \cd_x f, \bar{e} \right\rangle}_{I_1} +
    \underbrace{\dfrac{\alpha}{3} \left\langle \cd_x f, \fa_t \cdd_x \ed{m}{} \right\rangle}_{I_2}\\
    &\hspace{6mm} \underbrace{- \dfrac{\alpha}{3\beta} \left\langle \td{m}{}, \bar{e} \right\rangle - 2\left\langle \fd_x \td{m}{}, \fa_t \fd_x \ed{m}{} \right\rangle}_{I_3}.
  \end{align*}
  We note the following to evaluate the terms $I_1$ and $I_3$:
  \begin{align*}
    \left\| \bar{e} \right\|^2 &= \left\| \ed{m+1}{} \left( \ed{m+1}{} + \frac{1}{2} \ed{m}{} \right) + \ed{m}{} \left( \ed{m}{} + \frac{1}{2} \ed{m+1}{} \right) \right\|^2\\
    &\leq 2 \left\| \ed{m+1}{} + \frac{1}{2} \ed{m}{} \right\|_\infty^2 \left\| \ed{m+1}{} \right\|^2 + 2 \left\| \ed{m}{} + \frac{1}{2} \ed{m+1}{} \right\|_\infty^2 \left\| \ed{m}{} \right\|^2\\
    &\leq 2 (3 q r)^2 \left( \left\| \ed{m+1}{} \right\|^2 + \left\| \ed{m}{} \right\|^2 \right) \leq 36 q^2 r^2 \fa_t \left\| \ed{m}{} \right\|^2.
  \end{align*}
  Hence, $I_1$ and $I_3$ can be easily evaluated as:
  \begin{align*}
    I_1 &= - \dfrac{\alpha^2}{18\beta} \sum_{j=0}^{1} \left\langle \cd_x \left(2 \ed{m+j}{} \tud{m+j}{} + \ed{m+1-j}{} \tud{m+j}{} \right), \bar{e}^{(m+1/2)} \right\rangle\\
    &= - \dfrac{\alpha^2}{9\beta} \sum_{j=0}^{1} \left\langle \cd_x \ed{m+j}{} \ca_x \tud{m+j}{} + \ca_x \ed{m+j}{} \cd_x \tud{m+j}{}, \bar{e}^{(m+1/2)} \right\rangle\\
    &\hspace{4mm} - \dfrac{\alpha^2}{18\beta} \sum_{j=0}^1 \left\langle \cd_x \ed{m+1-j}{} \ca_x \tud{m+j}{} + \ca_x \ed{m+1-j}{} \cd_x \tud{m+j}{}, \bar{e}^{(m+1/2)} \right\rangle\\
    &\leq \dfrac{\alpha^2}{|\beta|} \left[ 12 q^2 r^2 + \frac{1}{6} \left( \sup_{t,x} u_x(t,x) \right)^2 \right] \fa_t \left\| \ed{m}{} \right\|^2 + \dfrac{\alpha^2}{6|\beta|} r^2 \fa_t \left\| \fd_x \ed{m}{} \right\|^2
  \end{align*}
  and
  \begin{align*}
    I_3 & \leq \dfrac{|\alpha|}{6|\beta|} \left[ \left\| \td{m}{} \right\|^2 + \left\| \bar{e}^{(m+1/2)} \right\|^2 \right] + \left\| \fd_x \td{m}{} \right\|^2 + \left\| \fa_t \fd_x \ed{m}{} \right\|^2\\
    & \leq \dfrac{6|\alpha|}{|\beta|} q^2 r^2 \fa_t \left\| \ed{m}{} \right\|^2 + \fa_t \left\| \fd_x \ed{m}{} \right\|^2 + \dfrac{|\alpha|}{6|\beta|} \left\| \td{m}{} \right\|^2 + \left\| \fd_x \td{m}{} \right\|^2.
  \end{align*}

  Therefore, the following remains to be evaluated:
  \begin{align*}
    I_2 &= \dfrac{\alpha}{3} \sum_{j=0}^1 \left\langle \cd_x \left( 2\ed{m+j}{} \tud{m+j}{} + \ed{m+1-j}{} \tud{m+j}{} \right), \fa_t \cdd_x \ed{m}{} \right\rangle\\
    &= \dfrac{2\alpha}{3} \sum_{j=0}^1 \left\langle \ca_x \ed{m+j}{} \cd_x \tud{m+j}{}, \fa_t \cdd_x \ed{m}{} \right\rangle\\
    &\hspace{5mm} + \dfrac{\alpha}{3} \sum_{j=0}^1 \left\langle \ca_x \ed{m+1-j}{} \cd_x \tud{m+j}{}, \fa_t \cdd_x \ed{m}{} \right\rangle\\
    & \hspace{5mm} + \dfrac{\alpha}{3} \sum_{j=0}^1 \left\langle \cd_x \ed{m+j}{} \ca_x \left( 2 \tud{m+j}{} + \tud{m+1-j}{} \right), \fa_t \cdd_x \ed{m}{} \right\rangle\\
    &\eqqcolon J_1 + J_2 + J_3.
  \end{align*}
  Subsequently, $J_1$ and $J_2$ are addressed through summation-by-parts, yielding:
  \begin{align*}
    J_1 &= \sum_{j=0}^1 \left\langle \ca_x \ed{m+j}{} \cd_x \tud{m+j}{}, \fa_t \cdd_x \ed{m}{} \right\rangle\\
    &= - \dfrac{2\alpha}{3} \sum_{j=0}^1 \left\langle \fd_x \left( \ca_x \ed{m+j}{} \cd_x \tud{m+j}{}\right), \fa_t \fd_x \ed{m}{} \right\rangle\\
    &= - \dfrac{2\alpha}{3} \sum_{j=0}^1 \left\langle \fd_x \ca_x \ed{m+j}{} \fa_x \cd_x \tud{m+j}{} + \fa_x \ca_x \ed{m+j}{} \fd_x \cd_x \tud{m+j}{}, \fa_t \fd_x \ed{m}{} \right\rangle\\
    &\leq \frac{2|\alpha|}{3} \sup_{t,x} |u_{xx}(t,x)|^2 \fa_t \left\| \ed{m}{} \right\|^2 + \frac{2|\alpha|}{3} \left[ 2 + \sup_{t,x} |u_x(t,x)|^2 \right] \fa_t \left\| \fd_x \ed{m}{} \right\|^2,
  \end{align*}
  and
  \begin{align*}
    J_2 \leq \frac{|\alpha|}{3} \sup_{t,x} |u_{xx}(t,x)|^2 \fa_t \left\| \ed{m}{} \right\|^2 + \frac{|\alpha|}{3} \left[ 2 + \sup_{t,x} |u_x(t,x)|^2 \right] \fa_t \left\| \fd_x \ed{m}{} \right\|^2.
  \end{align*}

  The final term, $J_3$, presents challenges due to the presence of $\cd_x \ed{m+j}{}$ and $\cdd_x \ed{m}{}$ in the inner product.
  Hence, it does not immediately yield an estimate in terms of $\|\fd_x \ed{m}{}\|^2$.
  The following identity which holds for any $K$-periodic sequences $a, b$ to circumvent this difficulty:
  \begin{align*}
    \left\langle a \cd_x b, \cdd_x b \right\rangle &= - \dfrac{1}{2} \left\langle \fd_x \left( a \fd_x b + a \bd_x b \right), \fd_x b \right\rangle\\
    &= - \dfrac{1}{2} \left\langle a_+ \fd_x \fd_x b + \fd_x a \fd_x b, \fd_x b \right\rangle - \dfrac{1}{2} \left\langle a \fd_x \bd_x b + \fd_x a \bd_x b_+, \fd_x b \right\rangle\\
    &= - \left\langle a \cdd_x b, \cd_x b \right\rangle - \left\langle \fd_x a \fd_x b, \fd_x b \right\rangle.
  \end{align*}
  We omitted the spatial sub-index $k$ to conserve space. The abbreviations $a_+ := a_{k+1}$, and $b_+ :=b_{k+1}$ are introduced.
  Using these notations, we derived the following expression:
  \begin{align}\label{eq_idea}
    \left\langle a \cd_x b, \cdd_x b \right\rangle &= - \dfrac{1}{2} \left\langle \fd_x a \fd_x b, \fd_x b \right\rangle \leq \dfrac{1}{2} \left\| \fd_x a \right\|_\infty \left\| \fd_x b \right\|^2,
  \end{align}
  which is effective in reducing the spatial differences in $b$.
  Applying this inequality, we further obtain:
  \begin{align*}
    J_3&= \dfrac{\alpha}{3} \sum_{j=0}^1 \left\langle \cd_x \ed{m+j}{} \ca_x \left( 2 \tud{m+j}{} + \tud{m+1-j}{} \right), \fa_t \cdd_x \ed{m}{} \right\rangle\\
    &= \dfrac{\alpha}{3} \sum_{j=0}^1 \left\langle \cd_x \ed{m+j}{} \ca_x \left\{ 3 \fa_t \tud{m}{} - \frac{(-1)^j}{2} \left( \tud{m+1}{} - \tud{m}{} \right) \right\}, \fa_t \cdd_x \ed{m}{} \right\rangle\\
    &= 2\alpha \left\langle \cd_x \fa_t \ed{m}{} \ca_x \fa_t \tud{m}{}, \fa_t \cdd_x \ed{m}{} \right\rangle\\
    &\hspace{2mm} + \dfrac{\alpha}{6} \left\langle \Dt \fa_t \cdd_x \ed{m}{} \ca_x \fd_t \tud{m}{} , \cd_x \ed{m+1}{} - \cd_x \ed{m}{} \right\rangle\\
    &\leq |\alpha| \left\| \fd_x \ca_x \fa_t \tud{m}{} \right\|_\infty \left\| \fd_x \fa_t \ed{m}{} \right\|^2\\
    &\hspace{2mm} + \dfrac{|\alpha|}{12} \left( 2 \left\| \ca_x \fd_t \tud{m}{} \right\|_\infty^2 \left\| \Dt \bd_x \right\|^2 \left\| \fa_t \fd_x \ed{m}{} \right\|^2 + \left\| \cd_x \ed{m+1}{} \right\|^2 + \left\| \cd_x \ed{m}{} \right\|^2 \right)\\
    &\leq |\alpha| \left[ \sup_{t,x} |u_x(t,x)| + \dfrac{2}{3} \left( \sup_{t,x} |u_t(t,x)| \right)^2 + \dfrac{1}{6} \right] \fa_t \left\| \fd_x \ed{m}{} \right\|^2,
  \end{align*}
  under the condition $\Dt \leq \Dx$ (it is explicitly used in the final inequality).
  The additional spatial difference operator is managed using two approaches: (1) by redistributing it to $\tilde{u}$ via \eqref{eq_idea}, and (2) by canceling it with the term $\tud{m+1}{k} - \tud{m}{k} = \mathrm{O}(\Dt)$.

  Combining all these evaluations yields estimate:
  \begin{align*}
    \fd_t \left( \left\| \fd_x \ed{m}{} \right\|^2 + A^{(m)} \right) &= I_1 + I_2 + I_3\\
    &\leq C_2 \fa_t \left\| \ed{m}{} \right\|_{H^1}^2 +  \left( \dfrac{|\alpha|}{6|\beta|} + 1 \right) (c_0)^2 \left( (\Dx)^2+(\Dt)^2 \right)^2,
  \end{align*}
  which holds for a constant $C_2>0$ (independent of $\Dt$ and $\Dx$). Here,
  \begin{align*}
    C_2 \coloneqq \max \Biggl\{
    & \dfrac{6|\alpha|}{|\beta|} \left( 1 + 2|\alpha| \right) q^2 r^2 + \dfrac{\alpha^2}{2|\beta|} \sup_{t,x} | u_x(t,x) |^2 + |\alpha| \left[ 2 + \sup_{t,x} |u_{xx}(t,x)|^2 \right],\\
    & 1 + \dfrac{\alpha^2}{2|\beta|} r^2 + |\alpha| \left[ \dfrac{3}{2} \sup_{t,x} |u_x(t,x)|^2 + \dfrac{2}{3} \sup_{t,x} |u_x(t,x)|^2 + \dfrac{2}{3} \right] \Biggr\}
  \end{align*}
  under the condition $\Dt \leq \Dx$.
\end{proof}


\section{Concluding remarks}
\label{sec:conclude}

This study introduces a novel argument strategy for theoretically analyzing general energy-preserving schemes for KdV-type equations.
The strategy is centered on employing an induction approach for local estimates and incorporating compensation for modified energy functions.
A detailed argument is presented for the standard KdV case, with additional discussion demonstrating that similar methodologies can be extended to the Ostrovsky and generalized KdV cases.
We conclude that this new argument strategy applies highly to a wide range of energy-preserving schemes.

\appendix
\section{Detailed proof of Theorem~\ref{thm_local_solvability}}
\label{app_local_solvability}
  Let $u_k \coloneqq \ud{m}{k}$ for simplicity. From the definition of the scheme, the function
  \begin{align*}
    \phi_k(w) &\coloneqq u_k + \Dt\left[ - \dfrac{\alpha}{6} \cd_x \left( w_k ^2 + w_k u_k + u_k^2 \right) + \frac{\beta}{2} \cd_x \cdd_x \left( w_k + u_k \right) \right]
  \end{align*}
  is defined, with $\phi(w) \coloneqq \left( \phi_k(w) \right)_k$.
  Hence, $w^\ast = \ud{m+1}{}$ if and only if $w^\ast$ is a fixed point of $\phi$.
  Therefore, providing that $\phi^{(m)}$ is a contraction mapping on
  \begin{align*}
    B(q,r) \coloneqq \left\{ w \relmiddle{|} \left\| w \right\|_\infty \leq q r \right\}.
  \end{align*}
  suffices.
  Specifically, the following must be verified: (i) $\phi(B(q,r)) \subseteq B(q,r)$ and (ii) $\phi$ is a contraction mapping.
  (i) First, we show that, if $\Dt < \varepsilon_1( q , r,\Dx)$, then $\phi(B( q, r)) \subseteq B( q, r)$ holds.
  For $w\in B( q, r)$,
  \begin{align*}
    \left| \phi_k(w) \right| &\leq \left\| u \right\|_\infty + \Dt \Biggl[ \dfrac{|\alpha|}{6} \frac{1}{\Dx} \left( \left\| w \right\|_\infty^2 + \left\| w \right\|_\infty \left\| u \right\|_\infty + \left\| u \right\|_\infty^2 \right)\\
    &\hspace{40mm} + \frac{|\beta|}{2} \frac{3}{(\Dx)^3} \left( \left\| w \right\|_\infty + \left\| u \right\|_\infty \right) \Biggr]\\
    &\leq r + \frac{\Dt}{(\Dx)^3} \left[ \frac{|\alpha|}{6} (\Dx)^2 ( q^2 + q + 1) r^2 + \frac{3}{2} |\beta| ( q+1) r \right]
  \end{align*}
  holds. Therefore, if
  \begin{align*}
    \Dt \leq ( q-1)(\Dx)^3 \left[ \frac{|\alpha|}{6} (\Dx)^2 ( q^2 + q + 1) r + \frac{3}{2} |\beta| ( q+1) \right]^{-1} = \varepsilon_1( q, r),
  \end{align*}
  then
  \begin{align*}
    &\left\| \phi(w) \right\|_\infty \leq r + \frac{\Dt}{(\Dx)^3} \left[ \frac{|\alpha|}{6} (\Dx)^2 ( q^2 + q + 1) r + \frac{3}{2} |\beta| ( q+1) \right] r \leq q r.
  \end{align*}
  (ii) Additionally, we show that, $\phi$ is a contraction mapping if $\Dt < \varepsilon_2( q, r, \Dx)$.
  For $w, \bar{w} \in B( q, r)$,
  \begin{align*}
    \phi_k(w) - \phi_k(\bar{w})
    &= \Dt\left[ - \dfrac{\alpha}{6} \cd_x \left\{ w_k \left(w_k + u_k \right) - \bar{w}_k \left(\bar{w}_k + u_k \right) \right\} + \frac{\beta}{2} \cd_x \cdd_x \left( w_k - \bar{w}_k \right) \right]\\
    &= \Dt\left[ - \dfrac{\alpha}{6} \cd_x \left\{ \left( w_k - \bar{w}_k \right) \left( w_k + \bar{w}_k + u_k \right) \right\} + \frac{\beta}{2} \cd_x \cdd_x \left( w_k - \bar{w}_k \right) \right],
  \end{align*}
  which implies
  \begin{align*}
    \left| \phi_k(w) - \phi_k(\bar{w}) \right| &\leq \Dt\left[ \dfrac{|\alpha|}{6} \frac{1}{\Dx} \left\|  w - \bar{w} \right\|_\infty \left\| w + \bar{w} + u \right\|_\infty + \frac{|\beta|}{2} \frac{3}{(\Dx)^3} \left\| w - \bar{w} \right\|_\infty \right]\\
    &\leq \frac{\Dt}{(\Dx)^3} \left[ \dfrac{|\alpha|}{6} (\Dx)^2 (2 q + 1) r + \frac{3}{2} |\beta| \right] \left\| w - \bar{w} \right\|_\infty.
  \end{align*}
  Therefore, if
  \begin{align*}
    \Dt &< (\Dx)^3 \left[ \dfrac{|\alpha|}{6} (\Dx)^2 (2 q + 1) r + \frac{3}{2} |\beta| \right]^{-1} = \varepsilon_2( q, r),
  \end{align*}
  then
  \begin{align*}
    \left\| \phi(w) - \phi(\bar{w}) \right\|_\infty &\leq \frac{\Dt}{(\Dx)^3} \left[ \dfrac{|\alpha|}{6} (\Dx)^2 (2 q + 1) r + \frac{3}{2} |\beta| \right] < \left\| w - \bar{w} \right\|_\infty.
  \end{align*}

  As a conclusion, if $\Dt < \min\{ \varepsilon_1( q, r,\Dx), \varepsilon_2( q, r,\Dx) \}$, then $\phi$ is a contraction mapping on $B(q,r)$.

\section{Remaining proof of Lemma~\ref{lem:e:1}} \label{app:lem:e:1}
  The remaining proof of Lemma~\ref{lem:e:1} is as follows. Using the notation introduced in Subsection~\ref{subsec:proof_lemma} and leveraging skew-symmetry, we obtain
  \begin{align*}
    \fd_t \left\| \ed{m}{} \right\|^2 &= 2 \left\langle \fd_t \ed{m}{}, \fa_t \ed{m}{} \right\rangle\\
    &= - \dfrac{\alpha}{3} \left\langle \cd_x \bar{e}, \fa_t \ed{m}{} \right\rangle - \dfrac{\alpha}{3} \left\langle \cd_x f, \fa_t \ed{m}{} \right\rangle - 2\left\langle \td{m}{}, \fa_t \ed{m}{} \right\rangle\\
    &= \dfrac{\alpha}{3} \left\langle \bar{e}, \cd_x \fa_t \ed{m}{} \right\rangle - 2\left\langle \td{m}{}, \fa_t \ed{m}{} \right\rangle\\
    &\hspace{5mm} + \dfrac{\alpha}{3} \sum_{j=0}^1 \left\langle \ed{m+j}{} \left( 2\tud{m+j}{} + \tud{m+1-j}{} \right), \cd_x \fa_t \ed{m}{} \right\rangle\\
    &\leq \dfrac{|\alpha|}{6} \left( \left\| \bar{e} \right\|^2 + \left\| \cd_x \fa_t \ed{m}{} \right\|^2 \right) + \left\| \td{m}{} \right\|^2 + \left\| \fa_t \ed{m}{} \right\|^2 \\
    &\hspace{5mm} + \dfrac{|\alpha|}{6} \sum_{j=0}^1 \left( \left\| \ed{m+j}{} \right\|^2 \left\| 2\tud{m+j}{} + \tud{m+1-j}{} \right\|_\infty^2 + \left\| \cd_x \fa_t \ed{m}{} \right\|^2 \right)\\
    &\leq \dfrac{|\alpha|}{6} \left( 36 q^2 r^2 \fa_t \left\| \ed{m}{} \right\|^2 + \fa_t \left\| \fd_x \ed{m}{} \right\|^2 \right)\\
    &\hspace{5mm} + \dfrac{|\alpha|}{6} \left( 18 r^2 \fa_t \left\| \ed{m}{} \right\|^2 + 2 \fa_t \left\| \fd_x \ed{m}{} \right\|^2 \right) + \left\| \td{m}{} \right\|^2 + \fa_t \left\| \ed{m}{} \right\|^2\\
    &\leq C_1 \fa_t \left\| \ed{m}{} \right\|_{H^1}^2 + (c_0)^2 \left( (\Dt)^2+(\Dx)^2 \right)^2
  \end{align*}
  holds for ${\displaystyle C_1 \coloneqq \max\left\{ 3 |\alpha| ( 2 q^2 + 1) r^2 + 1, |\alpha|/2\right\}}$.



\end{document}